\newcommand\C{{\mathbb C}}
\newcommand\Q{{\mathbb Q}}
\newcommand\Z{{\mathbb Z}}
\newcommand\al{\alpha}
\newcommand\be{\beta}
\newtheorem{theorem}{Theorem}[section]
\newtheorem{lemma}[theorem]{Lemma}
\newtheorem{corollary}[theorem]{Corollary}
\theoremstyle{definition}
\theoremstyle{remark}
\numberwithin{equation}{section}
\begin{document}

\title[Integer polynomials with several roots of 
maximal modulus]{Counting integer polynomials with several roots of maximal modulus}

\author{Art\= uras Dubickas}
\address{Institute of Mathematics, Faculty of Mathematics and Informatics, Vilnius University, Naugarduko 24,
LT-03225 Vilnius, Lithuania}
\email{arturas.dubickas@mif.vu.lt}

\author{Min Sha}
\address{School of Mathematical Sciences, South China Normal University, Guangzhou, 510631, China}
\email{min.sha@m.scnu.edu.cn}


\subjclass[2010]{11C08, 11R04, 11R09}

\keywords{Integer polynomials, roots, maximal modulus}

\begin{abstract}
In this paper, for positive integers $H$ and $k \leq n$, we obtain some estimates on the cardinality of the set of monic integer polynomials of
degree $n$ and height bounded by $H$ with exactly $k$ roots of maximal modulus. These include lower and upper bounds in terms of $H$ for fixed $k$ and $n$. We also count reducible and irreducible polynomials in that set separately. Our results imply, for instance, that 
the number of monic integer irreducible polynomials of degree $n$ and height at most $H$ whose all $n$ roots have equal moduli is approximately $2H$ for odd $n$, 
while
for even $n$ there are more than $H^{n/8}$ of such polynomials.
\end{abstract}

\maketitle

\section{Introduction}

\subsection{Background and motivation} 
Counting integer polynomials with respect to some arithmetic properties and under some measures has a long history and is still very active; see, for instance, \cite{Aki, Du2014, Du2016, DS1, DS2, DS3, DS4, Waerden} and the references therein. These arithmetic properties include reducibility, degeneracy, decomposability, moduli of roots, signature of roots, multiplicative dependence of roots, etc. The relevant measures include height, Mahler measure, house, and other useful measures. These counting problems not only are of independent interest but also arise naturally from other topics. 

Recall that every linear recurrence sequence (LRS) over the rational numbers $\Q$, denoted by $\{s_m\}_{m \ge 0}$, of order $n \ge 2$ is defined by the recurrence relation
\begin{equation*}
s_{m+n}=a_{n-1}s_{m+n-1}+\cdots+ a_1s_{m+1} + a_0s_m, \quad m=0,1,2,\ldots,
\end{equation*}
where $a_0,\dots,a_{n-1}\in \Q$ and $a_0 \ne 0$.
The characteristic polynomial of this linear recurrence sequence is
$$
f(x)=x^n-a_{n-1}x^{n-1}-\cdots - a_0 \in \Q[x].
$$
The roots of $f(x)$ are called the characteristic roots of this LRS. 

The famous Skolem-Mahler-Lech Theorem states that the set of zeros of a linear recurrence
sequence, that is, the set $\{m: \, s_m=0\}$, is the union of a finite set and finitely many arithmetic progressions. However, the corresponding algorithmic 
 question, called the Skolem Problem, which asks to determine whether a given LRS has a zero term, is still widely open. 

So far, the decidability of the Skolem Problem is only known in some special cases, based on the
relative order of the absolute values of the characteristic roots. 
We say that a characteristic root is \textit{dominant} if its absolute value is maximal among all the characteristic roots. The decidability is known when there are at most $3$ (without multiplicity) dominant characteristic roots, and also for such sequences
of order at most $4$ (see \cite{MST, Ver}). However, the Skolem Problem for LRS of order at least 5 is still not decidable. 
We refer to \cite{Bilu, Lipton, Luca1, Luca2, Luca3} for some recent work
on this.

Hence, it is of interest to count integer polynomials according to their dominant roots, from which one can see how often the Skolem Problem for a random LRS is decidable. 
In \cite[Theorem 1.1]{DS1}, we showed that most of the monic integer polynomials have exactly one dominant root (counted with multiplicity).  
In \cite[Theorem 1.1]{DS3}, we also showed that most of the integer polynomials (not necessarily monic) have exactly one or two dominant roots (counted with multiplicity).
Combining this with \cite{DS2}, the second named author pointed out in \cite{Sha} essentially that a random LRS is usually covered by the known cases and so its Skolem Problem is usually decidable.

In this paper, we want to further count monic integer polynomials with several dominant roots (again counted with multiplicity).

\subsection{Main results}
Recall that for a polynomial $f$ of degree $n \ge 1$ over the complex numbers $\C$,
the largest modulus of the roots of $f$ is called the {\it house} of $f$ or the \textit{inclusion radius} of $f$, and we denote it by  
$$
r(f):= \max_{1 \leq i\leq n}|\al_i|, 
$$
where $\alpha_1, \ldots, \alpha_n$ are all the roots of $f$ (not necessarily distinct). 
Then, the roots of $f$ with modulus $r(f)$ are said to be dominant, or, alternatively, these roots are said to be of maximal modulus. 

We also recall that the \textit{height} of $f$ is defined to be the maximum of the absolute values of its coefficients. 

For any positive integers $k \leq n$ and $H$,
let $D_n(k,H)$ be the number of monic integer polynomials $f$ of degree $n$ and height at most $H$ with exactly $k$ (not necessarily distinct) roots of maximal modulus. 
In this paper, we are interested in the size of the quantity $D_n(k,H)$. 

Here and below, 
$U \ll V$ or, equivalently, $V \gg U$ for two positive quantities $U,V$ 
(depending on $k,n,H$)
means that
$U \leq c V$ for some positive constant $c$ which may depend on $k$ and $n$ but does not depend on $H$. 
Besides, $U \asymp V$ means $U \ll V \ll U$, and $U \sim V$ means $\lim_{H \to \infty} U/V=1$.

Since there are $(2H+1)^n$ monic integer polynomials of degree $n$ and with height at most $H$, 
we clearly have
\begin{equation*}\label{klop1}
(2H+1)^n=\sum_{k=1}^n D_n(k,H).
\end{equation*}

The case $k=1$ has already been investigated 
in \cite[Theorem 1.1]{DS1}, where we determined the 
asymptotics of the quantity $D_n(1,H)$. We showed there that 
\begin{equation}\label{klop111}
D_n(1,H) \sim (2H)^n
\end{equation}
as $H \to \infty$. 
This together with Lemma~\ref{lem:reducible} below implies that most of the polynomials contributing to $D_n(1,H)$ are irreducible. 

For $k=2$, in \cite[Corollary 3.3]{DS3} (the relevant notation there is $A_n(2,H)$) we showed that
\begin{equation} \label{k=2}
    D_n(2,H) \asymp H^{n-1/2}.
\end{equation}
From \eqref{k=2} and Lemma~\ref{lem:reducible} we 
see that most of the polynomials contributing to $D_n(2,H)$ are  also irreducible. 

In addition, in \cite[Theorem 3.2]{DS3} (the relevant notation there is $A_n(k,H)$) we showed that
\begin{equation} \label{k>2}
    \sum_{k=3}^{n}D_n(k,H) \ll H^{n-1}.
\end{equation}

For $k \geq 3$, our main result is the following theorem, which implies the inequality
$$
\sum_{k=3}^{n}D_n(k,H) \ll H^{n-3/2}
$$
and so gives a refinement of \eqref{k>2}. We also construct many polynomials with exactly 
$k$ dominant roots and obtain nontrivial lower bounds for $D_n(k,H)$ when $k \ge 3$. (Here and below all the constants implied in $\ll$ depend on $k$ and $n$ but not on $H$.)

  \begin{theorem}\label{thm:Dn}
 For any integers $n \ge k \ge 3$ and $H \ge 1$, we have
 \begin{equation}\label{as1}
    H^{\frac{n+1}{2}-k+\frac{5k^2-4k+7}{8n} } \ll D_n(k,H) \ll H^{n-\frac{k+1}{2}} 	\end{equation}
for $k$ odd, and 
\begin{equation}\label{as2}
    H^{\frac{n+1}{2}-k+\frac{5k^2-2k+16}{8n}} \ll D_n(k,H) \ll H^{n-\frac{k-1}{2}}
    \end{equation}
    for $k$ even. 
 \end{theorem}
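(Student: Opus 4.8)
The plan is to prove the two bounds by entirely different methods and, in each, to split according to the parity of $k$. The parity enters because a real polynomial has its non‑real roots in conjugate pairs, so an odd number of roots of maximal modulus forces at least one \emph{real} dominant root (necessarily $\pm r$), while for even $k$ the $k$ dominant roots may form $k/2$ conjugate pairs with no real root at all. The one structural device I would use throughout is the substitution $w=z+q/z$: a real monic polynomial of degree $2m$ all of whose roots lie on the circle $|z|=\sqrt q$ is exactly $z^{m}P(z+q/z)$ for a real monic $P$ of degree $m$ all of whose roots lie in $[-2\sqrt q,2\sqrt q]$. This converts ``roots on a circle of radius $\sqrt q$'' into ``real roots in an interval,'' where the counting is governed by the scaling principle that the number of monic integer polynomials of degree $m$ with house $\le R$ (equivalently, with all real roots in an interval of length $\asymp R$) is $\asymp R^{m(m+1)/2}$.

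For the lower bounds I would construct $f=g\,h$, with $g$ carrying the $k$ dominant roots and $h$ monic of degree $n-k$ with house $<r$. For $k$ even, fix $q\in\Z_{>0}$, set $r=\sqrt q$, and take $g(x)=x^{k/2}P(x+q/x)$ with $P$ ranging over monic integer polynomials of degree $k/2$ with all roots in $[-2\sqrt q,2\sqrt q]$; then $g$ is monic integer of degree $k$ with every root of modulus exactly $\sqrt q$. For $k$ odd, force $r$ to be a positive integer, peel off one linear factor $x\mp r$, and apply the same device to the remaining even part. Since $\mathrm{height}(g)\ll r^{k}$ and $\mathrm{height}(h)\ll r^{n-k}$ we get $\mathrm{height}(f)\ll r^{n}$, so $r$ runs up to $\asymp H^{1/n}$; the number of admissible $g$ is $\asymp r^{k(k+2)/8}$ (even) or $\asymp r^{(k^{2}-1)/8}$ (odd), and the number of admissible $h$ is $\asymp r^{(n-k)(n-k+1)/2}$. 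Summing the product of these counts over all integers $q\le cH^{2/n}$ (even) or $r\le cH^{1/n}$ (odd) — the summation itself supplying an extra additive $O(1/n)$ in the exponent that is essential — reproduces $\frac{n+1}{2}-k+\frac{5k^{2}-2k+16}{8n}$ and $\frac{n+1}{2}-k+\frac{5k^{2}-4k+7}{8n}$ exactly. One must also check that distinct data $(q,P,h)$ yield essentially distinct $f$, which holds up to the bounded ambiguity of recovering the dominant modulus and the factorisation from $f$.

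For the upper bounds I would extend the argument of \cite{DS3} giving the case $k=2$, namely $D_n(2,H)\ll H^{n-1/2}$. The mechanism is that the top $k$ coefficients of $f$ are, up to errors controlled by the subdominant roots, the elementary symmetric functions of the dominant factor, and these are \emph{not} independent: the self‑inversive relations $e_{k-i}=\pm r^{\,k-2i}e_{i}$ coming from the dominant roots lying on one circle pin down about half of them, which is the source of the saving of roughly $k/2$ off the trivial exponent $n$, giving $n-\frac{k-1}{2}$ for even $k$. For odd $k$ the forced real root contributes the additional equation $f(\pm r)=0$, pushing the exponent further down to $n-\frac{k+1}{2}$. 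I would make this rigorous by a dyadic decomposition over the house $r$ and over the modulus of the largest subdominant root, bounding in each range the number of admissible integer coefficient vectors.

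The main obstacle is the upper bound. Unlike the lower bound, one cannot literally factor $f=g\,h$ over $\Z$, since the dominant factor need not be rational; moreover ``the $k$ largest roots have equal modulus'' is an analytic, semialgebraic condition rather than a single polynomial identity, so the self‑inversive relations hold only approximately, with errors that must be balanced against the integrality of the coefficients. Converting this approximate structure into a rigorous lattice‑point count — uniformly over all admissible radii and subdominant scales, and with the half‑integer savings correctly attributed to the parity of $k$ — is the delicate step, and it is exactly here that the passage from $k=2$ to general $k$ requires genuinely new bookkeeping beyond \cite{DS3}.
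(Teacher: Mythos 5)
Your lower bound is essentially the paper's own construction in different clothing: writing $g(x)=x^{k/2}P(x+q/x)$ is literally the same as taking $g(x)=\prod_{i}(x^{2}-\beta_{i}x+q)$ with $P(w)=\prod_i(w-\beta_i)$ having all real roots in $[-2\sqrt q,2\sqrt q]$, the counts $\asymp B^{m(m+1)/2}$ are Lemma~\ref{akis} (Akiyama--Peth\H{o}), the extra $H^{2/n}$ (resp.\ $H^{1/n}$) from summing over $q$ (resp.\ over the integer radius $r$) is exactly the paper's sum over $m$ in \eqref{mm1} and \eqref{mm11}, and the distinctness of the products $gh$ is handled in the paper by insisting that $P$ and $h$ be irreducible and invoking unique factorization. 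So that half is fine, modulo writing out the distinctness argument you wave at.

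The upper bound is where you have a genuine gap, and you say so yourself: the ``self-inversive relations hold only approximately'' and converting them into a lattice-point count is left as the ``delicate step.'' That step is the whole content of the upper bound, so as it stands nothing is proved. Moreover, the intended mechanism is misdirected on both counts. For even $k$ no self-inversive relations are needed at all: the bound $H^{n-(k-1)/2}$ follows from the elementary observation that $r(f)^{k}\le M(f)\ll H$ forces $r(f)\ll H^{1/k}$, whence $|a_{n-i}|\le\binom{n}{i}r(f)^{i}\ll H^{i/k}$, so the top $k$ coefficients admit only $\ll H^{(k+1)/2}$ values while the remaining $n-k$ are free (Lemma~\ref{uU}). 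For odd $k$ the extra saving of $1$ does \emph{not} come from imposing ``$f(\pm r)=0$'' as an additional equation --- $r$ is generally irrational, so this is not a usable linear condition on the integer coefficients, which is precisely why your approximate-relations plan stalls. The paper instead splits $D_n=I_n+R_n$: for irreducible $f$, the forced real dominant root lets Ferguson's theorem (Lemma~\ref{lem:Ferguson}) conclude $f(x)=g(x^{k})$, so $I_n(k,H)\ll H^{n/k}\le H^{n-(k+1)/2}$; for reducible $f=f_1f_2$ one uses multiplicativity of the Mahler measure, $H(f_1)H(f_2)\ll H(f)$, and a case analysis over $(\deg f_1,\,\#\{\text{dominant roots of }f_1\})$ to get $R_n(k,H)\ll H^{n-(k+1)/2}$. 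You would need either to adopt that decomposition or to supply the missing uniform lattice-point estimate; without one of the two, the right-hand inequalities in \eqref{as1} and \eqref{as2} are not established.
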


In order to investigate $D_n(k,H)$ in more detail, we define $I_n(k,H)$ (resp. $R_n(k,H)$) to be the number of monic irreducible (resp. reducible) integer polynomials $f$ of degree $n$ and height at most $H$ and with exactly $k$ (not necessarily distinct) roots of maximal modulus. Clearly, 
\begin{equation}   \label{eq:Dn}
D_n(k,H) = I_n(k,H) + R_n(k,H). 
\end{equation}

Below, we will separately consider the sizes of $I_n(k,H)$ and $R_n(k,H)$. 
For $I_n(k,H)$ we have the following:

\begin{theorem}\label{thm:In}
Let $n \geq 2$ and $H \ge 1$ be two integers. Then, for any odd integer $k$ satisfying $1 \le k \le n$, we have $I_n(k,H) \sim (2H)^{n/k}$ as $H \to \infty$ if $k \mid n$ and
$I_n(k,H)=0$ if $k \nmid n$. 
Also, for any integer $k$ with $1 \le k \le n$, we have
\begin{equation*}
I_n(k,H) \ll H^{n-(k-1)/2}.
\end{equation*}
Finally, for even $n \ge 2$, we have 
\begin{equation}\label{pu1}
I_n(n,H) \gg H^{\frac{n}{8}+\frac{2}{n}+\frac{1}{4}}.
\end{equation}
\end{theorem}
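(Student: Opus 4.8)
The plan is to realize polynomials with all roots on a single circle $|z|=\sqrt q$ via the classical ``$q$-reciprocal'' substitution. Fix a positive integer $q$ and a monic $g\in\Z[y]$ of degree $m:=n/2$, and set
\[
f(x)=x^{m}\,g\!\left(x+\frac{q}{x}\right).
\]
Expanding shows $f$ is a monic integer polynomial of degree $n$, and if $\be_1,\dots,\be_m$ are the roots of $g$ then $f(x)=\prod_{i=1}^m(x^2-\be_i x+q)$. A root $\al$ of a factor $x^2-\be_i x+q$ satisfies $\al\cdot(q/\al)=q$, so whenever $\be_i$ is real with $\be_i^2<4q$ the two roots are complex conjugates of modulus exactly $\sqrt q$. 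Hence, as soon as all roots of $g$ lie in the open interval $(-2\sqrt q,2\sqrt q)$, all $n$ roots of $f$ have modulus $\sqrt q$; that is, $r(f)=\sqrt q$ and $f$ has exactly $n$ roots of maximal modulus. Moreover all these roots have modulus $\ge 1$, so the Mahler measure of $f$ equals $q^{n/2}$, and the standard inequality $H(f)\le 2^{n}q^{n/2}$ between height and Mahler measure guarantees $H(f)\le H$ provided $q\le (2^{-n}H)^{2/n}\asymp H^{2/n}$.

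For irreducibility I would use total reality, which makes the usual obstruction vanish automatically. Suppose $g$ is irreducible over $\Q$ with all roots in $(-2\sqrt q,2\sqrt q)$. Then $K:=\Q(\be_1)$ is a totally real field of degree $m$, realized inside $\R$ via the real root $\be_1$. A root $\al_1$ of $x^2-\be_1x+q$ generates $K(\sqrt{\be_1^2-4q})$ over $K$; but $\be_1^2-4q<0$ cannot be a square in the real field $K$, so $[K(\al_1):K]=2$ and $[\Q(\al_1):\Q]=2m=n$. Since $\al_1$ is a root of the monic degree-$n$ polynomial $f$, it follows that $f$ is the minimal polynomial of $\al_1$ and hence irreducible. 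Thus every irreducible $g$ with roots in the open interval produces an irreducible $f$ counted by $I_n(n,H)$, and distinct pairs $(q,g)$ produce distinct $f$ (one recovers $q=r(f)^2$, and then the unique pairing of the roots of $f$ into conjugate pairs recovers $g$).

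It then remains to count, for each admissible $q$, the monic irreducible $g\in\Z[y]$ of degree $m$ with all roots in $(-2\sqrt q,2\sqrt q)$. Writing $T=2\sqrt q$ and scaling roots by $T$, the set of coefficient vectors of monic degree-$m$ polynomials with roots in $[-T,T]$ is an anisotropic dilation, by factors $T,T^2,\dots,T^m$, of a fixed bounded body whose volume is $\frac1{m!}\int_{[-1,1]^m}\prod_{i<j}|\ga_i-\ga_j|\,d\ga>0$ (the Jacobian of the elementary symmetric map is the Vandermonde determinant). This body has nonempty interior, so for $T$ large the number of integer points in the dilation is $\asymp T^{m(m+1)/2}\asymp q^{m(m+1)/4}$. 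The reducible ones, being products of two lower-degree polynomials of the same type, number only $\ll\sum_{d=1}^{m-1}T^{d(d+1)/2+(m-d)(m-d+1)/2}\ll T^{m(m+1)/2-(m-1)}$, of strictly smaller order for $m\ge 2$; hence $\gg q^{m(m+1)/4}$ of the $g$ are irreducible. Summing over the $\asymp H^{2/n}$ integers $q$ in a range $q\asymp H^{2/n}$ gives
\[
I_n(n,H)\gg H^{2/n}\cdot\big(H^{2/n}\big)^{m(m+1)/4}
=H^{\frac{2}{n}+\frac{(n/2)(n/2+1)}{4}\cdot\frac{2}{n}}
=H^{\frac{n}{8}+\frac{2}{n}+\frac14},
\]
as claimed.

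The main obstacle is the counting step: one must pass from the volume $\asymp T^{m(m+1)/2}$ of the coefficient region to an honest lower bound on the number of lattice points it contains. Because this region is an anisotropic dilation with all scaling factors $T^{j}\ge T\to\infty$ of a fixed body with nonempty interior, a standard geometry-of-numbers argument yields a matching lower bound once $q$ (hence $T$) is large, which is why it suffices to sum over $q$ in a dyadic range near $H^{2/n}$. The only remaining care is to confirm that reducible $g$ — and therefore reducible $f$ — form a negligible fraction, which follows from the identity $d(d+1)/2+(m-d)(m-d+1)/2=m(m+1)/2-d(m-d)$ with $d(m-d)\ge m-1$; for $n=2$ (so $m=1$) every admissible $g$ is linear and automatically irreducible, and the same count applies directly.
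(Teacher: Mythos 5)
Your construction and count for the lower bound \eqref{pu1} are correct and essentially coincide with the paper's: the paper likewise builds $f=\prod_{i}(x^2-\beta_i x+m)$ from a monic irreducible totally real polynomial $\prod_i(y-\beta_i)$ of degree $n/2$ with all roots in $(-2\sqrt{m},2\sqrt{m})$, checks $H(f)\le H$ via the multiplicativity of the Mahler measure exactly as you do, and gets the count $\gg m^{\ell(\ell+1)/4}$ per modulus by citing the Akiyama--Peth\H{o} theorem (Lemma~\ref{akis}) where you sketch the anisotropic lattice-point argument by hand (that sketch is standard and fine, though you should note you are re-deriving a known result). Your irreducibility argument --- $\beta_1\in\Q(\alpha_1)$, and $\beta_1^2-4q<0$ is not a square in the real field $\Q(\beta_1)$, so $[\Q(\alpha_1):\Q]=2\cdot(n/2)=n$ --- is a clean alternative to the paper's symmetric-function argument and is correct.

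However, you have proved only the third of the theorem's three assertions. Nothing in the proposal addresses the upper bound $I_n(k,H)\ll H^{n-(k-1)/2}$, nor the statement for odd $k$ that $I_n(k,H)=0$ when $k\nmid n$ and $I_n(k,H)\sim(2H)^{n/k}$ when $k\mid n$; neither follows from your construction. The upper bound comes from a coefficient count: $k$ dominant roots force $r(f)\ll H^{1/k}$, so the top $k$ non-leading coefficients take only $\ll H^{(k+1)/2}$ values (Lemma~\ref{uU}). The odd-$k$ assertion is the structurally deepest part and needs a separate idea: since non-real roots occur in conjugate pairs, an irreducible $f$ with an odd number $k$ of roots on $|z|=r(f)$ has a real dominant root, and Ferguson's theorem (Lemma~\ref{lem:Ferguson}) then forces $f(x)=g(x^k)$; this gives $k\mid n$ and reduces the count to polynomials $g$ contributing to $I_{n/k}(1,H)\sim(2H)^{n/k}$, with Capelli's lemma (Lemma~\ref{lem:Capelli}) handling, up to a negligible set of $g$ with $|g(0)|$ a perfect power, the converse direction that $g(x^k)$ is again irreducible. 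Without these ingredients the theorem as stated is not established.
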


Note that for odd $n$ one has $I_n(n,H) \sim 2H$ as $H \to \infty$ (compare with \eqref{pu1}).
We also remark that if $k$ is even and satisfies $k \nmid n$, it is not always true that $I_n(k,H)=0$. This is different from the case when $k$ is odd. This happens when $k=2$ and $n>2$ is odd (see Theorem~\ref{thm:InRn}), and it may also happen for $k>2$. 
For example, the  polynomial $x^6 - x^4 - 2x^3 + x^2 + x + 1$ is irreducible and has exactly $4$ roots of maximal modulus (in this example, $n=6$, $k=4$, and so $4 \nmid 6$). 

For $R_n(k,H)$, defining
\begin{equation}\label{eq:lower}
e(n,k) = \left\{\begin{array}{ll}
\frac{n+1}{2}-k+\frac{5k^2-4k+7}{8n}  & \textrm{if $k$ is odd},\\
\\
\frac{n+1}{2}-k+\frac{5k^2-2k+16}{8n}  & \textrm{if $k$ is even}
\end{array}\right.
\end{equation}
(see the exponents in the left hand sides of \eqref{as1}, \eqref{as2}), we will prove the following:

\begin{theorem}\label{thm:Rn}
For any integer $H \ge 1$, we have 
$$
R_2(1,H) \asymp H\log H, \quad R_2(2,H) \asymp H^{1/2}.
$$
For any integers $n\ge 3$ and $k$ with $1 \le k \le n$, we have 
\begin{equation*}
 R_n(k,H) \ll H^{n-(k+1)/2},
\end{equation*}
and also in all those cases, except when $k=n$ is even, 
\begin{equation}\label{pu2}
 R_n(k,H) \gg H^{e(n,k)}.  
\end{equation}
Finally, in the case when $k=n$ is even, we have 
\begin{equation}\label{pu3}
 R_n(n,H) \gg H^{\frac{n}{8}+\frac{2}{n}-\frac{1}{4}}.  
\end{equation}
\end{theorem}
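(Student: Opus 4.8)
The plan is to dispose of the degree-two cases by hand, to read off the upper bounds from the already-established estimates on $D_n(k,H)$ whenever the parity of $k$ permits, and to obtain the lower bounds from explicit reducible families. For $n=2$ every reducible monic polynomial has the shape $(x-a)(x-b)=x^2-(a+b)x+ab$ with $a,b\in\Z$, so that $\h(f)=\max(|a+b|,|ab|)$, and $f$ has both roots of maximal modulus exactly when $|a|=|b|$ and exactly one otherwise. Hence $R_2(2,H)$ counts the pairs with $a=\pm b$ and $a^2\le H$, giving $R_2(2,H)\asymp H^{1/2}$, whereas $R_2(1,H)$ counts the pairs with $|a|\ne|b|$ and $|ab|\le H$, the condition $|a+b|\le H$ being essentially harmless; the harmonic estimate $\sum_{0<|a|\le H}H/|a|\asymp H\log H$ then yields $R_2(1,H)\asymp H\log H$.

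For the upper bound when $n\ge3$ I would argue by parity. Since $R_n(k,H)\le D_n(k,H)$, the desired bound $R_n(k,H)\ll H^{n-(k+1)/2}$ is immediate from Theorem~\ref{thm:Dn} when $k$ is odd and $k\ge3$, and it follows from Lemma~\ref{lem:reducible} (which bounds the number of reducible monic integer polynomials of degree $n$ and height at most $H$) when $k=1$. The genuine case is $k$ even, where Theorem~\ref{thm:Dn} supplies only $H^{n-(k-1)/2}$ and an extra factor $H^{1/2}$ must be extracted from reducibility. Here the guiding principle is that two essentially independent savings are available: the coincidence of the $k$ largest moduli is a condition of ``codimension $(k-1)/2$'', responsible for the exponent $n-(k-1)/2$ in $D_n(k,H)$ for even $k$, while reducibility is itself a condition of ``codimension $1$'', responsible for the exponent $n-1$ in Lemma~\ref{lem:reducible}. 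To combine them I would write $f=g\cdot h$ with $h$ the product of all irreducible factors of house strictly below $r(f)$ and $g=f/h$ carrying all $k$ dominant roots (the factors of $f$ having height $\ll H$ by standard bounds), bound the number of admissible $g$ of each degree $m$ by $D_m(k,cH)$, and bound the number of subdominant completions $h$ by noting that the roots of $h$ lie strictly inside $|z|=r(f)$, so that the coefficients of $h$ are controlled by powers of $r(f)$ rather than by $H$, with the constant term of $f=gh$ coupling the two factors. Realizing the two codimension savings simultaneously, rather than forfeiting one of them, is the main obstacle of the proof.

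For the lower bound \eqref{pu2}, valid in every case except $k=n$ even, I would reuse the reducible families underlying the left-hand inequalities of \eqref{as1} and \eqref{as2}, whose construction in fact produces polynomials of the required type for every $k$ in the stated range. Those families realize $\gg H^{e(n,k)}$ monic integer polynomials of degree $n$ with exactly $k$ dominant roots in the factored form $f=g\cdot h$, where $g$ is an optimally chosen factor whose only roots of maximal modulus are the $k$ prescribed ones and $h$ is a subdominant factor of positive degree that supplies the free coefficients. Because such $f$ are reducible by construction, they are counted by $R_n(k,H)$, and I would only need to check that $\deg h\ge1$ throughout (so that the factorization is nontrivial) and that distinct parameters produce distinct polynomials, which gives $R_n(k,H)\gg H^{e(n,k)}$ at once.

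The remaining case $k=n$ even requires a separate family for \eqref{pu3}, since now all $n$ roots are dominant and no subdominant factor can exist. Instead I would force $f=g\cdot h$ to be a product of two monic integer factors all of whose roots share a common modulus $R$: fixing a small dominant factor such as $g(x)=x^2-q$ with $q=R^2\in\Z_{>0}$ and letting $h$ run over the monic integer $q$-reciprocal polynomials of degree $n-2$ whose roots all have modulus $\sqrt q$, I would count these through their free coefficients and optimize over $q$ up to a suitable power of $H$. This yields $\gg H^{n/8+2/n-1/4}$ reducible polynomials, a factor $H^{1/2}$ fewer than the irreducible family behind \eqref{pu1}; the loss is exactly the cost of pinning down one of the two reciprocal factors instead of choosing it freely.
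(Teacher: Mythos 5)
Your treatment of the degree-two cases and your lower-bound constructions are essentially the paper's: the families you invoke (a factor $g$ carrying the $k$ dominant roots on a common circle $|z|=\sqrt{m}$, times a subdominant factor $h$, with the separate product $(x^2-m)\cdot h$ for $k=n$ even) are exactly the ones the paper builds, counted via Lemma~\ref{akis}. But note two points. First, the logical order is inverted: in the paper the lower bounds in \eqref{as1} and \eqref{as2} are \emph{deduced from} \eqref{pu1} and \eqref{pu2}, so you cannot ``reuse the families underlying \eqref{as1}, \eqref{as2}'' without actually constructing them here (choosing $m\asymp H^{2/n}$ or $H^{1/n}$, invoking Lemma~\ref{akis} for the $\gg m^{\ell(\ell+1)/4}$ choices of the $\beta_i$, and proving that distinct pairs $(g,h)$ give distinct products, which requires the irreducibility of $g$ resp.\ $g/(x-m)$). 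Second, your check ``$\deg h\ge 1$ throughout'' fails for odd $k=n$, where $h=1$; there \eqref{pu2} still holds because $g=(x-m)\prod_i(x^2-\beta_i x+m^2)$ is itself reducible, a case your sketch does not cover.

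The serious gap is in the upper bound. For odd $k\ge 3$ you propose to read $R_n(k,H)\ll H^{n-(k+1)/2}$ off from Theorem~\ref{thm:Dn}; this is circular, since the paper obtains the odd-$k$ upper bound in \eqref{as1} precisely by adding $I_n(k,H)\ll H^{n/k}$ (Ferguson) to the bound on $R_n(k,H)$ you are trying to prove --- the only unconditional a priori bound is Lemma~\ref{uU}, which gives $H^{n-(k-1)/2}$, off by $H^{1/2}$. For even $k$ (and, once the circularity is repaired, for odd $k$ too) your sketch does not confront the hard case. Splitting $f=gh$ with $g$ the product of all irreducible factors of house $r(f)$ degenerates when every irreducible factor of $f$ is dominant (then $h=1$ and no saving is extracted), and even when $h\ne 1$ the product ``$(\text{number of }g)\times(\text{number of }h)$'' must be balanced against the constraint $H(g)H(h)\ll H$ by summing over the height of one factor. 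The paper's key maneuver, entirely absent from your outline, is the case where two irreducible factors each carry dominant roots with both degrees at least $2$: there $\min\{H(f_1),H(f_2)\}\ll H^{1/2}$ forces $r(f)\ll H^{1/(2k_1)}$, and it is this extra compression of the dominant radius that yields the missing factor $H^{1/2}$. Without that step (and the separate treatment of a linear dominant factor $x-b$ with $|b|=r(f)\ll H^{1/k}$), the claimed exponent $n-(k+1)/2$ is not reached.
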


For any fixed odd $k \ge 3$ (especially for $k=3$) and large enough $n$, Theorems~\ref{thm:In} and \ref{thm:Rn} show that most of the polynomials contributing to $D_n(k,H)$ are reducible, which is very much different from the cases when $k =1$ or $2$. 

To see how large the exponent $e(n,k)$ is, we can view it as a quadratic polynomial in $k$. By \eqref{eq:lower}, it can be easily verified that
\begin{equation*}
e(n,k) \ge \left\{\begin{array}{ll}
\frac{n+1}{10}  + \frac{31}{40n}  & \textrm{if $k$ is odd},\\
\\
\frac{n+3}{10} + \frac{79}{40n}  & \textrm{if $k$ is even}. 
\end{array}\right.
\end{equation*} 
This together with Theorem~\ref{thm:In} implies that for any odd $k \ge 11$ most of the polynomials contributing to $D_n(k,H)$ are reducible.

Below, in Theorem~\ref{thm:InRn} we collect the bounds for the cases when $k \le 2$ or $n \le 4$. 
It suggests that in Theorem~\ref{thm:In}, the upper bound $H^{n-(k-1)/2}$ for $I_n(k,H)$ is optimal when $k=1$ or $2$, but this is not true when $k=4$ (see the bound for $I_4(4,H)$). 
It also suggests that the upper bound $H^{n-(k+1)/2}$ for $R_n(k,H)$ in Theorem~\ref{thm:Rn} is optimal when $n \ge 3$ and $k=1$ or $2$, but this is not true when $k=3$ or $4$ (see the bounds for $R_3(3,H)$, $R_4(3,H)$ and $R_4(4,H)$). 

\begin{theorem}  \label{thm:InRn}
For any integers $n\ge 3$ and $H \ge 1$, we have 
\begin{align*}
I_n(1,H) &\sim (2H)^n,  &  R_n(1,H) &\asymp H^{n-1}, \\
I_n(2,H) &\asymp H^{n-1/2},  &  R_n(2,H) &\asymp H^{n-3/2}, \\
 I_3(3,H) &\sim 2H,  & R_3(3,H) &\asymp H^{2/3}, \\
 I_4(3,H) &=0,   & R_4(3,H) &\asymp H\log H, \\ 
 I_4(4,H) &\asymp H^{3/2},  & R_4(4,H) &\asymp H. 
\end{align*}  
\end{theorem}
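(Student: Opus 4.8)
The plan is to assemble the ten estimates from three ingredients: the established asymptotics for $k\le 2$, namely \eqref{klop111} and \eqref{k=2}; the general bounds of Theorems~\ref{thm:In} and \ref{thm:Rn} together with Lemma~\ref{lem:reducible}; and a handful of explicit factorisation analyses for the remaining low-degree cases $n\le 4$, $k\in\{3,4\}$, where one must beat the generic exponents coming from Theorems~\ref{thm:In}--\ref{thm:Rn}.

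For the four entries with $k\le 2$ the work is purely combinatorial. Since $D_n(1,H)=I_n(1,H)+R_n(1,H)$ and, as shown below, $R_n(1,H)\ll H^{n-1}=o(H^n)$, \eqref{klop111} gives $I_n(1,H)\sim(2H)^n$; likewise $I_n(2,H)=D_n(2,H)-R_n(2,H)\asymp H^{n-1/2}$ follows from \eqref{k=2} once $R_n(2,H)\ll H^{n-3/2}$ is known, the latter being the $k=2$ instance of the upper bound in Theorem~\ref{thm:Rn}. The bound $R_n(1,H)\ll H^{n-1}$ is immediate from Lemma~\ref{lem:reducible}. The matching lower bounds for $R_n(1,H)$ and $R_n(2,H)$ both come from one device: if $q$ is a monic integer polynomial of degree $n-1$, height at most $H$, with exactly $k$ roots of maximal modulus and $r(q)>0$, then $f(x)=x\,q(x)$ is a \emph{reducible} monic polynomial of degree $n$, of the same height, with exactly $k$ roots of maximal modulus (the extra root $0$ being subdominant). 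Applying this with $k=1$ and \eqref{klop111} in degree $n-1$ yields $R_n(1,H)\gg H^{n-1}$, and with $k=2$ and \eqref{k=2} in degree $n-1$ yields $R_n(2,H)\gg H^{n-3/2}$; the excluded polynomials with $r(q)=0$ are negligible.

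The entries $I_3(3,H)\sim 2H$ and $I_4(3,H)=0$ are direct specialisations of Theorem~\ref{thm:In}, since $3\mid 3$ and $3\nmid 4$ with $k=3$ odd. The three structural counts go as follows. For $R_3(3,H)$, a reducible cubic with all roots of one modulus $r$ has a rational root $\pm r\in\Z$, so $r\le H^{1/3}$, and the dominant family is $(x-a)(x^2+bx+a^2)$ with $|b|<2|a|$; summing the $\asymp|a|$ choices of $b$ over $|a|\le H^{1/3}$ gives $\asymp H^{2/3}$. For $R_4(4,H)$, a classification of the factorisation types shows the dominant contribution is the product of two conjugate-pair quadratics $(x^2-j_1x+N)(x^2-j_2x+N)$ with $N=r^2\in\Z$ and $|j_i|<2\sqrt N$; here $N^2\le H$ gives $N\le H^{1/2}$ and $\asymp N$ admissible pairs $(j_1,j_2)$ for each $N$, so the sum is $\asymp H$. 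For $I_4(4,H)$, the self-inversive structure of a monic quartic with all roots on a circle of radius $r$ pins $f$ down to $x^4+ax^3+bx^2+ar^2x+r^4$; when $a\neq0$ one is forced to $r^2\in\Z$, giving only $\asymp H^{5/4}$, while the case $a=0$ gives the biquadratics $x^4+bx^2+d$ with $b^2<4d$, which automatically have four roots of modulus $d^{1/4}$, are irreducible outside a set of size $\asymp H^{3/4}$, and contribute $\asymp\sum_{d\le H}\sqrt d\asymp H^{3/2}$; this also dominates $D_4(4,H)$ and hence supplies the matching upper bound.

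The main obstacle is $R_4(3,H)\asymp H\log H$, whose logarithm signals that the count is not a pure power. I would first show that a reducible quartic with exactly three equal-modulus roots cannot arise from a $2+2$ split (a real irreducible quadratic with a root of the maximal modulus $\sqrt N$ must equal $x^2-N$, whose other root has the same modulus, forcing $k=4$) nor from a conjugate-pair cubic factor (which forces a rational root and hence reducibility of that cubic); the surviving dominant family is $(x^3\pm M)(x-a)$ with $|a|<M^{1/3}$. Here the $x$-coefficient $M$ and the constant term $aM$ impose $M\le H$ and $|a|M\le H$, and the count becomes $\sum_{1\le a<H^{1/4}}(H/a)\asymp H\log H$, the harmonic sum producing the logarithm, whereas the remaining factorisation types contribute only $O(H^{3/4})$. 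Carrying out this case analysis rigorously --- in particular ruling out all other configurations for the \emph{upper} bound and confirming that no other family reaches order $H\log H$ --- is the most delicate step; the analogous but shorter classifications underlie the counts for $R_3(3,H)$, $I_4(4,H)$ and $R_4(4,H)$ above.
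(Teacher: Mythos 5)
Your proposal is correct and follows essentially the same route as the paper: the same reductions for $k\le 2$ (with $x\,q(x)$ in place of the paper's $(x+1)g(x)$ for the reducible lower bounds), the same dominant families $(x+a)(x^2+bx+a^2)$, $(x+a)(x^3+b)$ and $(x^2+ax+b)(x^2+cx+b)$ for $R_3(3,H)$, $R_4(3,H)$ and $R_4(4,H)$, and the same biquadratic/non-biquadratic split with the integrality of $r^2$ driving the $O(H^{5/4})$ bound for $I_4(4,H)$. The only deviations are minor: you rule out the irreducible $2{+}2$ split for $R_4(3,H)$ entirely where the paper merely bounds that case by $O(H)$, and you establish irreducibility of most biquadratics by directly counting the reducible ones rather than via Capelli's lemma (just make sure to justify, as the paper does, that an irreducible quartic with all roots of equal modulus has no real root before invoking the two-conjugate-pair form $x^4+ax^3+bx^2+ar^2x+r^4$).
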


We remark that the bounds for $R_2(1,H)$ and $R_2(2,H)$ have been stated in Theorem~\ref{thm:Rn}, and Theorems~\ref{thm:Dn} and \ref{thm:In} imply $I_2(1,H) \sim (2H)^2$ and $I_n(2,H) \asymp H^{3/2}$. 

Combining the last three lines of Theorem~\ref{thm:InRn} with \eqref{eq:Dn} we obtain the following corollary: 

\begin{corollary}
For any integer $H \ge 1$, we have 
\begin{equation*}
   D_3(3,H) \asymp H, \qquad D_4(3,H) \asymp H\log H, \qquad D_4(4,H) \asymp H^{3/2}. 
\end{equation*}
\end{corollary}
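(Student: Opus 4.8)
The plan is to derive this corollary directly from the additive decomposition in \eqref{eq:Dn}, namely $D_n(k,H) = I_n(k,H) + R_n(k,H)$, by substituting the relevant estimates from the last three lines of Theorem~\ref{thm:InRn} and, in each of the three cases, identifying which of the two summands dominates as $H \to \infty$. The only principle needed is the elementary observation that if $U \asymp H^a$ and $V \asymp H^b$ with $a > b$ (or one term vanishing identically), then $U + V \asymp H^a$, since the lower-order term is absorbed into the implied constants.

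First I would treat $D_3(3,H)$. Theorem~\ref{thm:InRn} gives $I_3(3,H) \sim 2H$, which in particular yields $I_3(3,H) \asymp H$, together with $R_3(3,H) \asymp H^{2/3}$. Applying \eqref{eq:Dn} and noting that $H^{2/3} \ll H$, the irreducible part is the dominant contribution, so $D_3(3,H) \asymp H$.

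Next I would handle $D_4(3,H)$, which is the simplest case because $I_4(3,H) = 0$ by Theorem~\ref{thm:InRn} (indeed $3 \nmid 4$, in accordance with Theorem~\ref{thm:In}). Hence \eqref{eq:Dn} reduces to $D_4(3,H) = R_4(3,H) \asymp H\log H$ with no competing term to compare. Finally, for $D_4(4,H)$ I would use $I_4(4,H) \asymp H^{3/2}$ and $R_4(4,H) \asymp H$; since $H \ll H^{3/2}$, the irreducible part again dominates and \eqref{eq:Dn} gives $D_4(4,H) \asymp H^{3/2}$.

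There is essentially no genuine obstacle here: the work has already been carried out in establishing Theorem~\ref{thm:InRn}, and the corollary is a bookkeeping consequence. If anything requires a word of care, it is merely confirming in each case that the two summands have comparable or strictly separated orders of magnitude so that the maximum governs the sum, which the explicit exponents make immediate.
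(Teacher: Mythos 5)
Your proposal is correct and coincides with the paper's own argument: the corollary is obtained exactly by combining the last three lines of Theorem~\ref{thm:InRn} with the decomposition \eqref{eq:Dn} and keeping the dominant summand in each case. Nothing further is needed.
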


Finally, we point out that Theorems~\ref{thm:In} and \ref{thm:Rn} imply Theorem~\ref{thm:Dn}. Indeed, for $k$ even the upper bound on $D_n(k,H)$ stated in Theorem~\ref{thm:Dn} (see \eqref{as2}) follows directly from \eqref{eq:Dn} and Theorems~\ref{thm:In} and \ref{thm:Rn} (see also Lemma~\ref{uU}). 
For $k$ odd the upper bound on $D_n(k,H)$ (see \eqref{as1}) follows from \eqref{eq:Dn} and Theorems~\ref{thm:In} and \ref{thm:Rn}, because $n/k \leq n-(k+1)/2$. 
The lower bounds on $D_n(k,H)$ as in \eqref{as1} and \eqref{as2} follow from 
\eqref{eq:Dn}, \eqref{pu1} and \eqref{pu2}. (Note that \eqref{pu1} gives a better exponent than that in \eqref{pu3}.) 
This completes the proof of Theorem~\ref{thm:Dn}.

In all that follows we first gather some auxiliary results in Section~\ref{sec:pre} and then
prove Theorems~\ref{thm:In}, \ref{thm:Rn} and \ref{thm:InRn}
in Sections~\ref{sec:proof1}, \ref{sec:proof2} and \ref{sec:proof3} respectively.

\section{Preliminaries}\label{sec:pre}

Recall that the {\it height} of a degree $n \geq 1$ polynomial 
\[f(x)=a_n x^n + a_{n-1} x^{n-1} +\dots + a_1 x +a_0=a_n \prod_{i=1}^n (x-\al_i) \in \C[x], a_n \ne 0,\]
is defined by \[H(f):=\max_{0 \leq i \leq n} |a_i|,\] and its {\it Mahler measure}
by 
\[M(f):=|a_n| \prod_{i=1}^n \max(1,|\al_i|).\]
The following inequalities between these two quantities are well-known:
\begin{equation}\label{ine5}
	2^{-n} H(f) \leq M(f) \leq \sqrt{n+1} \> H(f), 
	\end{equation}
see, for instance, \cite[(3.12)]{Wald}. 
So, for a fixed $n$, one has
\begin{equation}\label{eq:Mahler}
H(f) \ll M(f) \ll H(f).
\end{equation}
If $f$ can be factored as the product of two non-constant polynomials $g,h\in \C[X]$ (that is, $f=gh$), 
then, by the definition of the Mahler measure, we have 
$$
M(f)=M(g)M(h).
$$ 
So, combined  with \eqref{eq:Mahler} this yields
\begin{equation}   \label{eq:height}
H(g)H(h) \ll H(f) \ll H(g)H(h). 
\end{equation}

In \cite{Waerden}, van der Waerden proved the following result about counting monic reducible polynomials (see also \cite[Lemma 6]{Du2016} or \cite[Lemma 2.2]{DS4}). 
It implies that the number of monic irreducible integer polynomials of degree $n \ge 2$ and of height at most $H \ge 1$ is asymptotic to $(2H)^n$ as $H \to \infty$. 

\begin{lemma}\label{lem:reducible}
For integers $n\ge 2$ and $m \ge 1$ with $1 \le m \le n/2$, let $F_{n}(m,H)$ be the number of monic integer polynomials of degree $n$ and height at most $H$ which are reducible over the rational integers $\Z$ with a factor (not necessarily irreducible) of degree $m$. 
Then, if $1 \le m < n/2$, we have
$$
 F_{n}(m,H) \asymp H^{n-m},
$$
while if $m=n/2$, we have 
$$
F_{n}(n/2,H) \asymp H^{n/2}\log H.
$$
\end{lemma}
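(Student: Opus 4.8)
The plan is to reduce both bounds to counting pairs of monic factors and then to analyze the resulting height-weighted divisor sums. Since $f$ is monic, any factorization witnessing membership in the count $F_n(m,H)$ may be normalized as $f = gh$ with $g,h$ monic integer polynomials of degrees $m$ and $n-m$, and by \eqref{eq:height} there are constants $c_1,c_2>0$ with $c_1 H(g)H(h) \le H(f) \le c_2 H(g)H(h)$. Throughout I will use that the number of monic integer polynomials of degree $d$ and height at most $B\ge 1$ is $\asymp B^{d}$, while the number of those of height exactly $a$ is $\ll a^{d-1}$.

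For the upper bound I would bound $F_n(m,H)$ by the number of \emph{all} such pairs $(g,h)$, which by the left inequality of \eqref{eq:height} satisfy $H(g)H(h)\le cH$ with $c=1/c_1$ (so in particular $H(g)\le cH$). Grouping by the value $a=H(g)$ and using the counts above for $g$ (height exactly $a$) and for $h$ (height at most $cH/a$) yields
\[
F_n(m,H) \ll \sum_{a=1}^{\lfloor cH\rfloor} a^{m-1}\,(H/a)^{n-m} = H^{n-m}\sum_{a=1}^{\lfloor cH\rfloor} a^{2m-n-1}.
\]
When $m<n/2$ the exponent satisfies $2m-n-1\le -2$, so the sum is $O(1)$ and $F_n(m,H)\ll H^{n-m}$; when $m=n/2$ the exponent is $-1$, the sum is $\asymp\log H$, and $F_n(n/2,H)\ll H^{n/2}\log H$.

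For the lower bound when $m<n/2$ it suffices to exhibit $\gg H^{n-m}$ admissible polynomials, and here a single fixed factor already works: take $g=x^{m}$ and let $h$ range over the $\asymp H^{n-m}$ monic polynomials of degree $n-m$ and height at most $H$. Each product $x^{m}h$ is reducible with the degree-$m$ factor $x^{m}$, satisfies $H(x^{m}h)=H(h)\le H$, and distinct choices of $h$ give distinct products, so $F_n(m,H)\gg H^{n-m}$ and thus $F_n(m,H)\asymp H^{n-m}$.

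The delicate case, which I expect to be the main obstacle, is the lower bound $\gg H^{n/2}\log H$ when $m=n/2$: a fixed factor only produces $\asymp H^{n/2}$ polynomials, so one must vary both factors while controlling how often the same product arises. My plan is to count ordered pairs $(g,h)$ with $g$ monic \emph{irreducible} of degree $n/2$, $h$ monic of degree $n/2$, and $H(g)H(h)\le H/c_2$ (so that $H(gh)\le H$ by the right inequality of \eqref{eq:height}). The purpose of forcing $g$ to be irreducible is exactly to tame the multiplicity of the factorization: since $\deg f=n=2\cdot(n/2)$, any such $f$ has at most two monic irreducible factors of degree $n/2$ counted with multiplicity, hence is produced by at most two admissible pairs, and the number of distinct products is at least half the number of pairs. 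To count the pairs I would sum over dyadic ranges $H(g)\in[A,2A]$: by the upper bound already established (applied in degree $n/2$ with factor-degree $1$) the number of monic \emph{reducible} polynomials of degree $n/2$ and height at most $A$ is $\ll A^{n/2-1}$, so the number of monic \emph{irreducible} ones with height in $[A,2A]$ is $\gg A^{n/2}$; pairing each with the $\asymp (H/A)^{n/2}$ admissible $h$ contributes $\gg H^{n/2}$ per range, and summing over the $\asymp\log H$ ranges with $A\ll H$ gives $\gg H^{n/2}\log H$ pairs, hence $\gg H^{n/2}\log H$ distinct products. This matches the upper bound and yields $F_n(n/2,H)\asymp H^{n/2}\log H$.
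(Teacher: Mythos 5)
The paper does not actually prove this lemma: it is quoted as van der Waerden's theorem, with pointers to \cite{Waerden}, \cite{Du2016} and \cite{DS4}, so there is no internal proof to compare against. Your self-contained argument is the standard one and is essentially correct. The normalization $f=gh$ with both factors monic, the comparison \eqref{eq:height} between $H(f)$ and $H(g)H(h)$, the divisor-type upper bound $H^{n-m}\sum_{a}a^{2m-n-1}$ (convergent for $m<n/2$, logarithmic for $m=n/2$), the lower bound via the fixed factor $x^m$ when $m<n/2$, and, for $m=n/2$, the device of taking the degree-$n/2$ factor $g$ irreducible so that each product $gh$ arises from at most two ordered pairs (since $f$ of degree $n$ has at most two distinct monic irreducible divisors of degree $n/2$) are all sound, and the dyadic bookkeeping checks out.

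One step needs repair. To produce $\gg A^{n/2}$ monic irreducible polynomials of degree $d=n/2$ with height in $[A,2A]$, you bound the reducible ones by ``the upper bound applied in degree $n/2$ with factor-degree $1$'', i.e.\ by $F_{d}(1,2A)\ll A^{d-1}$. That only counts reducible polynomials possessing a \emph{linear} factor; for instance, when $n=8$ a product of two irreducible quadratics is reducible of degree $4$ with no linear factor, so it escapes your count. The fix is immediate: every reducible monic polynomial of degree $d$ has a monic factor of some degree $m'$ with $1\le m'\le d/2$, so the number of reducible ones of height at most $2A$ is at most $\sum_{1\le m'\le d/2}F_{d}(m',2A)\ll A^{d-1}\log A$ by the upper bounds you have already established for every $m'$ (a legitimate bootstrap, since your upper-bound argument is independent of the lower bound); this is still $o(A^{d})$, so the count of irreducibles in $[A,2A]$ is indeed $\gg A^{d}$ for $A$ large, and discarding the bounded ranges of $A$ costs only $O(1)$ of the $\asymp\log H$ dyadic blocks. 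With that correction the proof is complete.
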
 

The following lemma is \cite[Theorem 3.2]{Aki}.

\begin{lemma}\label{akis}
	Let $s \geq 0$ and $n \geq 1$ be two integers 
	satisfying $s \leq n/2$. 
     Then, there are constants $v_1(s,n)>0$ and $v_2(s,n)>0$ such that 
for each $B > 0$ the number $J_n(s,B)$ of monic irreducible integer polynomials $f$
     of degree $n$ with $r(f) \leq B$ and with exactly $2s$ non-real roots 
     satisfies
	\[
		\left| J_n(s,B)-v_1(s,n) B^{n(n+1)/2} \right| \le v_2(s,n) B^{n(n+1)/2-1}.
	\]
\end{lemma}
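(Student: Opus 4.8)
The plan is to count lattice points in a region of coefficient space and then discard the reducible polynomials. Writing $f(x)=x^n+a_{n-1}x^{n-1}+\cdots+a_0$, I identify $f$ with the point $(a_0,\ldots,a_{n-1})\in\Z^n$. The condition $r(f)\le B$ says that all roots lie in $\{|z|\le B\}$; by the Schur--Cohn root-location criterion this is a bounded semialgebraic region $\mathcal R_n(B)\subset\R^n$ whose defining inequalities have degree and number depending only on $n$. The key observation is the scaling $g(x)=B^{-n}f(Bx)$: the polynomial $g$ has all its roots in the closed unit disk, the same number of non-real roots as $f$, and coefficient of $x^j$ equal to $a_j/B^{n-j}$. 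Hence $\mathcal R_n(B)$ is the image of the fixed set $\mathcal R_n(1)$ under the diagonal map $(b_0,\ldots,b_{n-1})\mapsto(B^n b_0,B^{n-1}b_1,\ldots,B b_{n-1})$, whose Jacobian is $B^{n(n+1)/2}$.

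Let $\mathcal R_{n,s}(1)\subseteq\mathcal R_n(1)$ be the open subset of polynomials with exactly $2s$ non-real (hence simple) roots; it is cut out by requiring the discriminant to be nonzero together with sign conditions, so it is semialgebraic of bounded complexity, and it is nonempty (take $s$ distinct conjugate pairs $re^{\pm i\theta}$ with $0<r<1$ together with $n-2s$ distinct reals in $(-1,1)$), hence of positive volume $V_{s,n}:=\mathrm{vol}(\mathcal R_{n,s}(1))$. Scaling as above, the set of all monic integer $f$ with $r(f)\le B$ and exactly $2s$ non-real roots is $\Z^n\cap\mathcal R_{n,s}(B)$, a region of volume $V_{s,n}B^{n(n+1)/2}$. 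I would then invoke Davenport's lattice-point lemma: since $\mathcal R_{n,s}(B)$ and all of its coordinate projections meet every axis-parallel line in at most $C(n)$ intervals (by Tarski--Seidenberg the projections are semialgebraic of complexity bounded in terms of $n$), one gets
\[
\Bigl|\,\#\bigl(\Z^n\cap\mathcal R_{n,s}(B)\bigr)-V_{s,n}B^{n(n+1)/2}\Bigr|\ll \max_{0\le j\le n-1}\mathrm{vol}_{n-1}\bigl(\pi_j\mathcal R_{n,s}(B)\bigr),
\]
where $\pi_j$ forgets the coordinate $a_j$. Under the anisotropic scaling the largest projection is the one forgetting $a_{n-1}$ (which scales only by $B$), of volume $\ll B^{n(n+1)/2-1}$, so the number of such $f$, irreducible or not, is $V_{s,n}B^{n(n+1)/2}+O(B^{n(n+1)/2-1})$.

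It remains to remove the reducible polynomials. Any reducible monic $f$ of degree $n$ with $r(f)\le B$ factors as $f=gh$ with $g,h$ monic over $\Z$, $\deg g=m$, $1\le m\le n/2$, and $r(g),r(h)\le B$. A monic degree-$m$ polynomial of house at most $B$ has its coefficient of $x^j$ bounded by $\binom{m}{j}B^{m-j}$, so there are $\ll B^{m(m+1)/2}$ choices for $g$ and likewise for $h$; thus the number of reducible $f$ is
\[
\ll \sum_{1\le m\le n/2} B^{m(m+1)/2+(n-m)(n-m+1)/2}=\sum_{1\le m\le n/2} B^{n(n+1)/2-m(n-m)}\ll B^{n(n+1)/2-(n-1)},
\]
which for $n\ge2$ is $\le B^{n(n+1)/2-1}$ (and the reducible count is $0$ when $n=1$). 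This is absorbed into the error term, so subtracting it leaves the main term unchanged and yields the claim with $v_1(s,n)=V_{s,n}>0$.

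I expect the main obstacle to be the rigorous error analysis in the lattice-point count: one must verify that $\mathcal R_{n,s}(B)$ together with all of its coordinate projections has boundary consisting of boundedly many semialgebraic pieces, so that Davenport's hypotheses hold with constants depending only on $n$, and then check that under the strongly anisotropic scaling the projection volumes are genuinely dominated by $B^{n(n+1)/2-1}$ rather than by a larger power. A secondary subtlety is that the stratification of $\mathcal R_n(1)$ by the number of non-real roots is well defined only off the discriminant locus; one must confirm that this locus is lower-dimensional and hence contributes only to the error term.
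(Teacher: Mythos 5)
The paper offers no proof of this lemma---it is quoted directly from Akiyama and Peth\H{o} \cite[Theorem 3.2]{Aki}---but your outline reproduces the standard argument behind that result: anisotropic rescaling of the coefficient region of monic real polynomials with all roots in the closed unit disk and exactly $2s$ non-real roots (Jacobian $B^{n(n+1)/2}$), a Davenport-type lattice-point count using the bounded semialgebraic complexity of that region and of its coordinate projections, and removal of the reducible polynomials, whose count $\ll B^{n(n+1)/2-(n-1)}$ is absorbed into the error term. The points you flag as requiring verification (positivity of the volume of the stratum, domination of every projection volume by $B^{n(n+1)/2-1}$ under the scaling, and the measure-zero discriminant locus) are exactly the technical checks carried out in the cited source and they all go through, so the sketch is sound and matches the known proof.
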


The next lemma proved by Ferguson \cite{Ferguson} is about irreducible polynomials with many roots of equal modulus. 
It is a generalization of Boyd's result \cite{Boyd} about irreducible polynomials with many roots of maximal modulus.

\begin{lemma}  \label{lem:Ferguson}
Suppose that an irreducible integer polynomial $f(x) \in \Z[x]$ has $k$ roots, at least one real, on the circle $|z| = c>0$. Then, $f(x) = g(x^k)$, where $g(x)$ has no more than one real root on any circle in $\C$ with center at the origin.
\end{lemma}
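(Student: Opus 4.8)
The plan is to deduce the factorization $f(x)=g(x^k)$ from a rotational symmetry of the set of roots, and then to read off the stated property of $g$ from the count of roots on the circle. Write $\zeta=e^{2\pi i/k}$ and let $V$ be the set of roots of $f$. Since $f$ is irreducible and has a root of modulus $c>0$, it is not divisible by $x$, so $f(0)\ne0$. I claim it suffices to prove that $V$ is invariant under multiplication by $\zeta$. Indeed, if $\zeta V=V$, then the roots of $f(\zeta^{-1}x)$ are $\zeta V=V$, so $f(\zeta^{-1}x)=\zeta^{-n}f(x)$; comparing coefficients forces $a_j=0$ whenever $j\not\equiv n\pmod k$, and since $a_0=f(0)\ne0$ this gives $k\mid n$ and that only degrees divisible by $k$ occur. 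That is exactly the statement $f(x)=g(x^k)$ for some $g\in\Z[x]$ of degree $n/k$.

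Next comes the main step: showing $\zeta V=V$. Let $\alpha_1=\epsilon c$ with $\epsilon\in\{1,-1\}$ be the guaranteed real root on $|z|=c$, and let $S\subseteq V$ be the set of all $k$ roots on that circle. Because $f$ has real coefficients, complex conjugation maps $S$ to itself, acting there by $\beta\mapsto c^2/\beta=\alpha_1^2/\beta$ (using $\beta\overline{\beta}=c^2=\alpha_1^2$ and $\overline{\alpha_1}=\alpha_1$). The crux is to prove that for every $\beta\in S$ the ratio $\beta/\alpha_1$ is a root of unity. \emph{This is the heart of the matter and the main obstacle}, since the Galois group does not preserve archimedean moduli and so cannot be invoked directly; one must bring in the actual size of the roots. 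When $c=r(f)$ is maximal this is Boyd's argument via power sums: for monic $f$ the sums $p_m=\sum_{\alpha\in V}\alpha^m$ are rational integers and satisfy $p_m=c^m\sum_{\beta\in S}(\beta/c)^m+o(c^m)$, and their integrality forces the unit-modulus numbers $\beta/c$ to be roots of unity; Ferguson's contribution is to push this through for an arbitrary circle $|z|=c$, which is precisely where the analytic control of the moduli enters. (For non-monic $f$ one reduces to the minimal polynomial, or argues directly with the house.)

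Granting that each $\beta/\alpha_1$ is a root of unity, I would upgrade this local symmetry on $S$ to the global symmetry $\zeta V=V$ by Galois transitivity. Put $R=\{\beta/\alpha_1:\beta\in S\}$, a set of $k$ roots of unity containing $1$ and closed under inversion (conjugation realizes $\xi\mapsto\xi^{-1}$). If $\alpha_1\xi$ is a root with $\xi$ a root of unity, and $\sigma$ is a field automorphism carrying $\alpha_1$ to an arbitrary root $\gamma$, then $\sigma(\alpha_1\xi)=\gamma\,\sigma(\xi)$ is again a root, and $|\gamma\,\sigma(\xi)|=|\gamma|$ because $\sigma(\xi)$ is again a root of unity; thus every root-of-unity rotation present at $\alpha_1$ is transported to \emph{every} circle. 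Feeding this back into the constraint $|S|=k$ (for instance, an order-$4$ ratio on a three-point circle would manufacture a fourth root on that circle) forces $R$ to be exactly the group $\mu_k$ of $k$-th roots of unity, and the same transport then yields $\gamma\zeta\in V$ for every $\gamma\in V$, i.e. $\zeta V=V$. With the reduction above this gives $f(x)=g(x^k)$; moreover, since the Galois group acts transitively on $V$ it acts transitively on $\{\alpha^k:\alpha\in V\}$, the roots of $g$, so $g$ is irreducible.

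Finally, I would derive the property of $g$. Suppose for contradiction that $g$ had two real roots on some circle, i.e. both $\rho$ and $-\rho$ with $\rho>0$. Then $g(x)$ and $g(-x)$ share the root $\rho$; as each is irreducible up to sign, they are proportional, which forces all monomials of $g$ to have a common parity. The odd case yields $g(0)=0$, hence $f(0)=0$, which is impossible; so $g(x)=G(x^2)$ is even. But then $f(x)=G(x^{2k})$ is a polynomial in $x^{2k}$, so $V$ is invariant under multiplication by $e^{\pi i/k}$ and every circle carries a multiple of $2k$ roots, contradicting the fact that $|z|=c$ carries exactly $k$ of them. Hence $g$ has at most one real root on each circle centered at the origin, which completes the proof.
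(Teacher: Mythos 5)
The paper does not actually prove this lemma: it is imported verbatim from Ferguson \cite{Ferguson} (generalizing Boyd \cite{Boyd}), so there is no internal argument to compare yours against. Judged on its own terms, your proposal is an outline with its central step missing rather than a proof. Everything hinges on the claim that $\beta/\alpha_1$ is a root of unity for every root $\beta$ on the circle $|z|=c$; you correctly flag this as ``the heart of the matter'' and then write ``Granting that\dots''. Kronecker's theorem is unavailable because, as you note, the Galois conjugates of $\beta/\alpha_1$ need not have modulus $1$, and the only substitute you offer is a gesture at power sums, $p_m=c^m\sum_{\beta\in S}(\beta/c)^m+o(c^m)$. That asymptotic is simply false unless $c=r(f)$ is the maximal modulus (otherwise the roots of larger modulus dominate and the contribution of $S$ is swallowed by the error term), and even in the maximal case ``integrality forces roots of unity'' is not a one-line deduction, since $c^m$ need not be rational. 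So the one place where the archimedean size of the roots must genuinely enter --- which is precisely Ferguson's contribution for a non-maximal circle --- is left blank.

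The surrounding structure is mostly sound \emph{granting} that claim: the reduction from $\zeta V=V$ to $f(x)=g(x^k)$, the transport of root-of-unity ratios to every circle via the Galois action, and the closing parity argument (two real roots of $g$ on one circle would give $g(-x)=\pm g(x)$, forcing either $g(0)=0$ or $f\in\Z[x^{2k}]$ and hence $2k$ roots on $|z|=c$) are all fine. But the step ``feeding this back into the constraint $|S|=k$ forces $R$ to be exactly the group of $k$-th roots of unity'' is itself only sketched: an automorphism sending $\alpha_1$ to $\alpha_1\xi$ sends $\alpha_1\xi$ to $\alpha_1\xi\sigma(\xi)$, where $\sigma(\xi)$ is merely some primitive root of unity of the same order as $\xi$, not $\xi$ itself, so showing that $R$ is closed under the relevant operations and has exponent dividing $k$ requires more care than the single example you give. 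As it stands the proposal cannot be accepted as a proof of the lemma; either cite Ferguson, as the paper does, or supply his argument for the root-of-unity claim in full.
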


The following result is a corollary of Capelli's lemma about the irreducibility of compositions of polynomials. 

\begin{lemma} \label{lem:Capelli}
 Let $g(x)$ be a monic irreducible integer polynomial of positive degree such that $|g(0)|^{1/m} \not\in \Q$ for any integer $m > 1$. Then, $g(x^k)$ is also irreducible in $\Z[x]$ for any integer $k \ge 1$. 
\end{lemma}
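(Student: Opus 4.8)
The plan is to deduce this from the classical Capelli--Vahlen criterion on the irreducibility of binomials. Let $\theta$ be a root of $g$ and set $L=\Q(\theta)$, so that $[L:\Q]=\deg g=:d$. Capelli's lemma reduces the irreducibility of $g(x^k)$ over $\Q$ to that of the binomial $x^k-\theta$ over $L$: since $g$ is irreducible with root $\theta$, the polynomial $g(x^k)$ is irreducible over $\Q$ if and only if $x^k-\theta$ is irreducible over $L$. The Vahlen--Capelli theorem then characterizes the latter: $x^k-\theta$ is irreducible over $L$ precisely when $\theta$ is not a $p$-th power in $L$ for any prime $p\mid k$, and, in the case $4\mid k$, additionally $\theta\notin -4L^4$. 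Thus it suffices to rule out these two obstructions using the hypothesis on $g(0)$.

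First I would record two preliminary facts. Since $g$ is monic, its constant term satisfies $g(0)=(-1)^d N_{L/\Q}(\theta)$, where $N_{L/\Q}$ denotes the field norm; in particular the hypothesis $|g(0)|^{1/m}\notin\Q$ for all $m>1$ forces $g(0)\neq 0$, hence $\theta\neq 0$ and all norms below are nonzero. The key mechanism is that $N_{L/\Q}$ is multiplicative and sends $L$ into $\Q$, so any multiplicative structure of $\theta$ inside $L$ descends to a corresponding structure of a rational number.

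Now suppose, for contradiction, that $\theta=\beta^p$ for some $\beta\in L$ and some prime $p\mid k$. Taking norms gives $(-1)^d g(0)=N_{L/\Q}(\theta)=N_{L/\Q}(\beta)^p$ with $N_{L/\Q}(\beta)\in\Q$, whence $|g(0)|^{1/p}=|N_{L/\Q}(\beta)|\in\Q$, contradicting the hypothesis with $m=p>1$. Similarly, if $4\mid k$ and $\theta=-4\gamma^4$ for some $\gamma\in L$, then taking norms yields $(-1)^d g(0)=(-4)^d N_{L/\Q}(\gamma)^4$, that is $g(0)=4^d N_{L/\Q}(\gamma)^4=\bigl(2^d N_{L/\Q}(\gamma)^2\bigr)^2$, so that $|g(0)|^{1/2}=2^d |N_{L/\Q}(\gamma)|^2\in\Q$, again contradicting the hypothesis (now with $m=2$). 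Hence both obstructions are excluded, $x^k-\theta$ is irreducible over $L$, and therefore $g(x^k)$ is irreducible over $\Q$; as $g(x^k)$ is monic with integer coefficients, Gauss's lemma upgrades this to irreducibility in $\Z[x]$.

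I expect the only delicate point to be the exceptional $4\mid k$ clause of the Vahlen--Capelli criterion: one must apply the norm to the condition $\theta\in -4L^4$ correctly and observe that it produces a rational square root of $|g(0)|$, which is exactly what the $m=2$ instance of the hypothesis forbids. The prime-power case is the routine part.
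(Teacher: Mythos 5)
Your proof is correct and follows essentially the same route as the paper: both reduce via Capelli's lemma to the irreducibility of $x^k-\theta$ over $\Q(\theta)$, invoke the Vahlen--Capelli criterion, and derive a contradiction with the hypothesis on $|g(0)|$ by a norm computation. The only difference is cosmetic: the paper writes $\beta=h(\theta)$ with $h\in\Q[x]$ and multiplies over the conjugates of $\theta$, which is exactly your $N_{L/\Q}$.
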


\begin{proof} 
First, we note that for any monic integer polynomial, its irreducibility in the ring $\Z[x]$ is equivalent to that over the rational numbers $\Q$. 
Since $g(x)$ is irreducible, by Capelli's lemma, we know that $g(x^k)$ is irreducible over $\Q$ if and only if $x^k - \alpha$ is irreducible over $\Q(\alpha)$ for any root $\alpha$ of $g(x)$ (note that $\Q(\alpha)$ is a subfield of the complex numbers $\C$).  

Now, by contradiction, we suppose that $g(x^k)$ is reducible in $\Z[x]$ for some integer $k \ge 1$. 
Then, $x^k - \alpha$ is reducible over $\Q(\alpha)$ for some root $\alpha$ of $g(x)$. 
Using a classical result also due to Capelli (see \cite[Theorem 19]{Sch}), we deduce that either $\alpha=\beta^p$ for some $\beta \in \Q(\alpha)$ and some prime divisor $p$ of $k$, 
 or $4 \mid k$ and $\alpha=-4\beta^4$ for some $\beta \in \Q(\alpha)$. 
So, there exists a polynomial $h(x) \in \Q[x]$ such that either $\alpha = h(\alpha)^p$ or $\alpha = -4h(\alpha)^4$. 
Note that $g(x)$ is the minimal polynomial of $\alpha$ over $\Q$, and so either $g(x) \mid h(x)^p-x$ or $g(x) \mid 4h(x)^4 +x$. Let $\alpha_1, \ldots, \alpha_d$ be all the roots of $g(x)$ (not necessarily distinct). 
Then, either 
$$
\alpha_1 \cdots \alpha_d = h(\alpha_1)^p \cdots h(\alpha_d)^p = \left(h(\alpha_1) \cdots h(\alpha_d) \right)^p, 
$$
or 
$$
\alpha_1 \cdots \alpha_d = (-4h(\alpha_1)^4) \cdots (-4h(\alpha_d)^4) = (-1)^d 2^{2d}\left(h(\alpha_1) \cdots h(\alpha_d) \right)^4. 
$$
So, noticing $g(0)=(-1)^d \alpha_1 \cdots \alpha_d$ and $h(\alpha_1) \cdots h(\alpha_d) \in \Q$, we have either $|g(0)|^{1/p} \in \Q$ or $|g(0)|^{1/2} \in \Q$. This contradicts with the assumption that $|g(0)|^{1/m} \not\in \Q$ for any integer $m > 1$. 

Hence, we may conclude that $g(x^k)$ is irreducible in $\Z[x]$ for any integer $k \ge 1$. 
\end{proof}

We conclude this section with the following upper bound:

\begin{lemma} \label{uU}
For any integers $n \geq k \geq 1$ and $H \geq 1$ we have
\begin{equation}\label{uUU} 
D_n(k, H) \ll H^{n-(k-1)/2}. 
\end{equation}
\end{lemma}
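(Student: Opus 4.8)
The plan is to classify the polynomials counted by $D_n(k,H)$ according to the shape of their dominant roots and to bound each class by a lattice-point count whose exponent is governed by a codimension. Write $r=r(f)$ for the house and group the $k$ dominant roots (counted with multiplicity) into conjugation orbits: say $c$ complex-conjugate pairs $\{\al,\bar\al\}$ with $|\al|=r$ and $d$ real roots equal to $\pm r$, so that $k=2c+d$. It then suffices to bound, for each admissible pair $(c,d)$ with $2c+d=k$, the number of monic integer polynomials of height at most $H$ whose dominant roots realise that configuration; configurations with repeated orbits impose extra conditions and are subleading, so the leading contribution comes from $c+d$ distinct simple orbits. The cases $k=1$ and $k=2$ are immediate, the former from the trivial bound $(2H+1)^n\ll H^n$ and the latter from \eqref{k=2}.

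Within such a configuration the $c+d$ orbits all lie on the single circle $|z|=r$, which forces $c+d-1$ equal-modulus relations among the orbits (each orbit's modulus equals that of a fixed reference orbit). Each relation is a real-algebraic (more precisely, semialgebraic) condition on the coefficient vector $(a_0,\dots,a_{n-1})\in\Z^n$ cutting out a set of codimension one, and together they cut a locus of codimension $c+d-1$ in coefficient space. Counting integer points of height at most $H$ on such a locus by eliminating $c+d-1$ of the coefficients one obtains $\ll H^{\,n-(c+d-1)}$ polynomials in this class. Comparing with the target deficit $(k-1)/2=(2c+d-1)/2$, one has $c+d-1\ge(2c+d-1)/2$ precisely when $d\ge1$; hence this dimension count already yields $\ll H^{\,n-(k-1)/2}$ for every configuration with at least one real dominant root, and in particular settles all odd $k$ (where $k=2c+d$ forces $d\ge1$).

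The remaining and genuinely harder case is $d=0$ with $k=2c$ even: all dominant roots are complex, and the codimension count gives only $c-1=(k-2)/2$, short of the required $(k-1)/2$ by exactly $\tfrac12$. Here I would recover the missing $H^{1/2}$ from an arithmetic input rather than from dimension. The point is that having the house attained at a non-real root is already a thin event of relative density $\asymp H^{-1/2}$: for a complex pair of modulus $r$ to dominate, the real root of size comparable to $|a_{n-1}|$ must be beaten, which confines $a_{n-1}$ (equivalently $r$ relative to the root sum) to a range of size $\ll H^{1/2}$. This is exactly the mechanism behind the sharp bound $D_n(2,H)\asymp H^{n-1/2}$ in \eqref{k=2}, and I would adapt the upper-bound argument from \cite{DS3}. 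Combining this base saving of $H^{1/2}$ with the $c-1$ further equal-modulus relations (codimension $c-1$) then yields $\ll H^{\,n-(c-1)-1/2}=H^{\,n-(k-1)/2}$, as required.

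The main obstacle is twofold, and both difficulties lie in making these heuristics rigorous and compatible. First, one must realise the equal-modulus conditions as genuinely independent codimension $c+d-1$ semialgebraic conditions and carry out the integer-point count with the clean exponent; this is delicate because the modulus-equality locus has measure zero, so the naive volume vanishes and one cannot simply invoke a Davenport-type volume estimate but must instead eliminate coefficients and control the number of admissible values uniformly. Second, in the even all-complex case one must prove that the arithmetic $H^{1/2}$-saving is \emph{independent} of the remaining $c-1$ codimension conditions, so that the two savings genuinely multiply rather than overlap. I expect this second point to be the crux, and to require running the $k=2$ counting argument relative to a fixed choice of the other dominant pairs rather than quoting \eqref{k=2} as a black box.
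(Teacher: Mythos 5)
Your proposal is a program rather than a proof: the two steps you yourself identify as the crux --- (i) converting the $c+d-1$ equal-modulus relations into an integer-point count that cleanly loses a factor $H^{c+d-1}$, and (ii) showing that the extra $H^{1/2}$ arithmetic saving in the all-complex case multiplies with, rather than overlaps, the codimension saving --- are precisely the places where the argument is absent, and neither is routine. The equal-modulus loci are measure-zero real-algebraic sets, so there is no volume to integrate, and ``eliminating coefficients'' on them with uniform control of the fibres is genuine work (the sharp bounds for $k=2$ in \eqref{k=2} already required a careful argument in \cite{DS3} to extract a single such half-power saving). There is also a smaller unaddressed issue: dominant roots with multiplicity do not fit your parametrisation of $k=2c+d$ by distinct conjugation orbits, and dismissing them as ``subleading'' presupposes another codimension count that has not been established. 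So as written the proposal does not establish the lemma.

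The paper's proof is entirely different and far more elementary; it never examines the configuration of the dominant roots at all. Since $f$ has exactly $k$ roots of modulus $r(f)$, the Mahler measure satisfies $r(f)^k \le M(f) \le \sqrt{n+1}\,H$, whence $r(f) \ll H^{1/k}$. The coefficients are elementary symmetric functions of the roots, so $|a_{n-i}| \le \binom{n}{i} r(f)^i$ for every $i$; hence the top $k$ coefficients $(a_{n-1},\dots,a_{n-k})$ range over at most $\ll r(f)^{1+2+\dots+k} \ll H^{k(k+1)/(2k)} = H^{(k+1)/2}$ values, while the remaining $n-k$ coefficients contribute the trivial factor $\ll H^{n-k}$. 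Multiplying gives $\ll H^{n-(k-1)/2}$. The single idea you are missing is that having exactly $k$ dominant roots already forces the house down to $H^{1/k}$ via the Mahler measure, which compresses the top $k$ coefficients into a box of cardinality $H^{(k+1)/2}$; no equal-modulus geometry, case analysis, or appeal to \eqref{k=2} is needed.
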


\begin{proof} 

Consider a monic integer polynomial 
\[
f(x)=x^n+a_{n-1}x^{n-1}+\dots+a_1x+a_0,
\]
with $a_{n-1}, \dots, a_1, a_0 \in \Z \cap [-H,H]$.  Assume that $f$ has exactly $k$ roots of maximal modulus.
 Then, by \eqref{ine5}, we obtain 
\[r(f)^k \leq M(f) \leq \sqrt{n+1} \> H(f) \leq \sqrt{n+1} H,\]
which gives 
\begin{equation} \label{eq:r(f)}
r(f) \ll H^{1/k}.  
\end{equation}
From  $|a_{n-i}| \leq \binom{n}{i} r(f)^i$ for any integer $i$ with $1 \le i \le n$, we see that 
the vector 
\[(a_{n-1},a_{n-2},\dots,a_{n-k})\]
can take at most
$\ll r(f)^{1+2+\dots+k}\ll r(f)^{k(k+1)/2} \ll H^{(k+1)/2}$ values.
Since the vector
\[(a_{n-k-1},a_{n-k-2},\dots,a_{0})\]
takes at most $(2H+1)^{n-k} \ll H^{n-k}$
values, there are at most
\[\ll H^{(k+1)/2+n-k} \ll H^{n-(k-1)/2}\]
suitable monic polynomials $f$. This proves
\eqref{uUU}.
\end{proof}

\section{Proof of Theorem~\ref{thm:In}} \label{sec:proof1}

By \eqref{eq:Dn} and Lemma~\ref{uU}, we deduce
\begin{equation}  \label{eq:In}
I_n(k, H) \le D_n(k, H) \ll H^{n-(k-1)/2},
\end{equation} 
 as claimed in the theorem. 

Now, we assume that $k$ is odd. 
For any polynomial $f(x)$ contributing to $I_n(k,H)$, 
by Lemma~\ref{lem:Ferguson}, we know that 
$f(x) = g(x^k)$, where $g(x)$ is a monic irreducible integer polynomial of degree $n/k$ and height at most $H$ and with exactly one root of maximal modulus (that is, $g(x)$ contributes to $I_{n/k}(1,H)$). So, we must have $k \mid n$, which implies $I_n(k, H) = 0$ if $k \nmid n$.  
Conversely, if $g(x)$ is a polynomial as the above, then $g(x^k)$ is a monic integer polynomial of degree $n$ and height at most $H$ and with exactly $k$ roots of maximal modulus. It remains to count the number of such polynomials $g(x)$ for which $g(x^k)$ is also irreducible. By Lemma~\ref{lem:Capelli}, when $|g(0)| \ne b^m$ for any positive integers $b,m \ge 2$, $g(x^k)$ is irreducible. It is easy to see that the number of such polynomials $g(x)$ for which  $|g(0)| = b^m$ for some positive integers $b,m \ge 2$ is at most 
$$
(2H+1)^{n/k - 1} \cdot 2\sum_{b=2}^{\sqrt{H}} \log_{b} H \ll H^{n/k-1/2}\log H. 
$$
Hence, the number of such polynomials $g(x)$ for which $g(x^k)$ is also irreducible is asymptotic to $(2H)^{n/k}$ as $H \to \infty$. Therefore, for $k \mid n$, we deduce
$$
I_n(k,H) \sim (2H)^{n/k}
$$
as $H \to \infty$.

In the case when $n \ge 2$ is even, the desired lower bound for $I_n(n,H)$ will be proved below in \eqref{eq:Inn}.

\section{Proof of Theorem~\ref{thm:Rn}} \label{sec:proof2}
\subsection{Proof of the upper bound}

We will show that \[R_n(k,H)\ll H^{n -(k+1)/2}.\] 

First, we consider the case when $n=2$. 
Note that a polynomial $f(x)$ contributing to $R_2(2,H)$ must be of the form $f(x)=(x-a)^2$ or $f(x)=x^2-a^2$. 
So, we have $R_2(2,H) \asymp H^{1/2}$. Then, the bound $R_2(1,H) \asymp H\log H$ follows from Lemma~\ref{lem:reducible}.

Assume that $n \geq 3$.
The case when $k=1$ is trivial by Lemma~\ref{lem:reducible}, so assume that $k \ge 2$.
By induction on $n$ or $k$, it suffices to consider those polynomials $f(x)$ contributing to $R_n(k,H)$ of the form 
\begin{equation}  \label{eq:f1f2}
f(x)=f_1(x)f_2(x),
\end{equation}
 where both $f_1$ and $f_2$ are monic irreducible integer polynomials. We further assume that $\deg f_i = n_i$,  and $f_i$ has exactly $k_i$ roots of modulus $r(f)$, $i=1,2$. So, 
$$
n_1 \ge 1, \quad  n_2 \ge 1, \quad n = n_1 + n_2,  \quad k = k_1 + k_2. 
$$
We denote by $C_n(n_1, k_1, H)$ the number of such polynomials $f(x)$  of the form \eqref{eq:f1f2} contributing to $R_n(k,H)$. 
Our aim is to show the inequality
\begin{equation}\label{as4}
C_n(n_1,k_1,H) \ll H^{n-(k+1)/2}. 
\end{equation}

We first consider the case when $k_1k_2=0$. 
Without loss of generality, we assume $k_1=0$  (so, $k_2=k$). 
By \eqref{eq:height} and \eqref{eq:In} and considering possible choices of $f_1,f_2$ (with $H(f_1) = b$ and $H(f_2)\ll H/b$ for $1 \le b \le H$), we obtain 
\begin{equation*}
\ll \sum_{b=1}^{H} b^{n_1} \Big(\frac{H}{b}\Big)^{n_2 - (k_2-1)/2} = H^{n_2 - (k-1)/2} \sum_{b=1}^{H}\frac{1}{b^{n_2 - n_1 - (k-1)/2}}
\end{equation*}
polynomials of the form \eqref{eq:f1f2}.
Consequently,
\begin{equation}  \label{eq:n10}
C_n(n_1, 0, H) \ll  H^{n_2 - (k-1)/2} \sum_{b=1}^{H}\frac{1}{b^{n_2 - n_1 - (k-1)/2}}.
\end{equation}
If $n_2 - n_1 - (k-1)/2 > 1$, then 
\begin{equation*}
C_n(n_1, 0, H) \ll H^{n_2 - (k-1)/2} \le H^{n - (k+1)/2}
\end{equation*}
which is \eqref{as4}.
If $n_2 - n_1 - (k-1)/2 = 1$, then $k$ must be odd (so $k \ge 3$). Now, if $n_1 \ge 2$, then from \eqref{eq:n10} we have 
\begin{equation*}
C_n(n_1, 0, H) \ll H^{n_2 - (k-1)/2}\sum_{b=1}^{H} \frac{1}{b} \ll   H^{n_2 - (k-1)/2} \log H \ll  H^{n - (k+1)/2},
\end{equation*}
which is \eqref{as4}.
If $n_1=1$, then, observing that $n_2 = 2+ (k-1)/2$ and $n_2 \ge k \ge 3$, we deduce $k=3$ and hence  $n_2 = 3$. Thus, $f_2(x)=x^3 +a$ for some integer $a$ by Lemma~\ref{lem:Ferguson}. This implies 
\begin{equation*}
C_n(n_1, 0, H) \ll H^2 = H^{n - (k+1)/2},
\end{equation*}
as required. 

It remains to consider the case $n_2 - n_1 - (k-1)/2 < 1$. If $n_2 \ge 3$, then from \eqref{eq:n10} and noticing that $n_2 \ge k$ we deduce
\begin{equation*}
C_n(n_1, 0, H) \ll H^{n_2 - (k-1)/2} \cdot H^{1-(n_2 - n_1 - (k-1)/2)} = H^{n_1 +1} \le H^{n - (k+1)/2},
\end{equation*}
which is \eqref{as4}.
If $n_2=2$, then we must have $k=2$ (note that we have assumed $k \ge 2$), and then if moreover $n_1 \ge 2$, 
we have (considering $H(f_2)= b$ and $H(f_1)\ll H/b$ for $1 \le b \le H$)
\begin{equation*}
C_n(n_1, 0, H) \ll \sum_{b=1}^{H} (H/b)^{n_1} b^{n_2-(k_2-1)/2} = H^{n_1} \sum_{b=1}^{H} \frac{1}{b^{n_1-3/2}} \ll H^{n_1+1/2}, 
\end{equation*}
which yields \eqref{as4} due to $n_1+1/2=n-(k+1)/2$.
It remains to consider the subcase when $n_1=1, n_2=2$ and $k_2=k=2$, for which by \eqref{eq:In} we deduce (as $n=n_1+n_2=3$)
\begin{equation*}
C_n(n_1, 0, H) \ll \sum_{b=1}^{H} (H/b)^{n_2 - (k_2-1)/2} = H^{3/2} \sum_{b=1}^{H} \frac{1}{b^{3/2}} \ll H^{3/2} = H^{n-(k+1)/2}. 
\end{equation*}
Therefore, when $k_1=0$, we always have \eqref{as4}. 

We now consider the case when $k_1k_2 > 0$. Assume first that either $n_1=1$ or $n_2=1$. 
Without loss of generality, we assume that $n_1=1$. 
 Then, $k_1=n_1=1$, $n_2 = n-1$, $k_2=k-1$. 
Noticing $r(f) \ll H^{1/k}$ by \eqref{eq:r(f)}, we know that $f_1(x)$ is of the form $x-b$ with $|b|\ll H^{1/k}$, 
and when $b$ is fixed, we have $r(f_2)=r(f)=b$ and $H(f_2) \ll H/b$. So, considering possible choices of $f_2$, we obtain 
\begin{equation}\label{eq:n11}
C_n(1, 1, H) \ll \sum_{b=1}^{\lfloor H^{1/k} \rfloor} b^{1+2+\cdots + k_2} \Big(\frac{H}{b}\Big)^{n_2 - k_2} = H^{n - k} \sum_{b=1}^{\lfloor H^{1/k} \rfloor}\frac{1}{b^{n-\frac{k(k+1)}{2}}}.
\end{equation}
Noticing $ n - k < n- (k+1)/2$ (since $k \ge 2$), we find that that in the case when $n - k(k+1)/2 \ge 1$, 
from \eqref{eq:n11} the inequality  
\begin{equation*}
C_n(1, 1, H) \ll H^{n-k}\log H \ll  H^{n - (k+1)/2} 
\end{equation*}
is true.
Otherwise, we have
\begin{equation*}
C_n(1, 1, H) \ll H^{n - k+\frac{1}{k}\big(1-n + k+ \frac{k(k -1)}{2}\big)} = H^{n-\frac{k+1}{2} + \frac{k+1-n}{k}}, 
\end{equation*}
which, together with another assumption $k \le n-1$, gives 
\eqref{as4}
again. 
So, it remains to consider the subcase when $n_1=k_1 = 1$ and $k=n$, so that $n_2=k_2 = n-1$. In this subcase, such polynomials $f(x)$ are of the form $f(x)=(x-b)f_2(x)$ with $b \in \Z$ and $r(f_2)=r(f)=|b| \ll H^{1/n}$ by \eqref{eq:r(f)}, and then noticing $k=n$ we know that the constant term of $f_2$ is fixed up to a sign if $b$ is fixed,  and so we have 
\begin{equation*}
C_n(1, 1, H) \ll \sum_{b=1}^{H^{1/n}} b^{1+2+\cdots + (n-2)} \ll H^{\frac{1}{n}\big(1+\frac{(n-1)(n-2)}{2}\big)}= H^{\frac{n-1}{2}+\frac{2}{n} - 1}, 
\end{equation*}
which yields \eqref{as4}.

Now, we consider the final case when $k_1k_2 > 0$ and $n_1, n_2 \ge 2$. 
Since 
$$H(f_1)H(f_2) \ll H(f) \le H,$$
 we have $\min\{H(f_1), H(f_2)\} \ll H^{1/2}$. 
Without loss of generality, we assume $H(f_1) \ll H^{1/2}$. 
So, $r(f)=r(f_1) \ll H^{1/(2k_1)}$ by applying \eqref{eq:r(f)} to $f_1$. 
Then, by \eqref{eq:In} and considering possible choices of $f_1, f_2$, we obtain 
\begin{equation*}
\begin{split}
C_n(n_1, k_1, H) & \ll H^{\frac{1}{2}(n_1 - \frac{k_1 -1}{2})} \cdot H^{n_2 - \frac{k_2-1}{2}-1} \cdot H^{\frac{1}{2k_1}} \\
& =  H^{\frac{1}{2}n_1 + n_2 - \frac{1}{4}k_1- \frac{1}{2}k_2 - \frac{1}{4} + \frac{1}{2k_1}}. 
\end{split}
\end{equation*}
Note that since $n_1 \ge 2$ and $n_1 \ge k_1 \ge 1$, we have 
\begin{equation*}
\begin{split}
& n - \frac{k+1}{2} - \left(\frac{1}{2}n_1 + n_2 - \frac{1}{4}k_1- \frac{1}{2}k_2 - \frac{1}{4} + \frac{1}{2k_1}\right) \\ 
& \quad  = \frac{1}{4}\left(2n_1 - k_1 - 1 - \frac{2}{k_1}\right) \ge 0.
\end{split}
\end{equation*}
Hence, we have \eqref{as4} again.

Note that the number of choices of $(n_1,k_1)$ depends only on $n$ and $k$. 
So \eqref{as4} yields the desired upper bound
$R_n(k,H)  \ll H^{n -(k+1)/2}$.

\subsection{Proof of the lower bound}
Now, we want to prove the claimed lower bound 
\[R_n(k,H) \gg H^{e(n,k)}.\] 

Assume first that the integer $k$ satisfying $2 \leq k \leq n$ is even. Set $\ell=k/2$.  Fix an integer $m$ 
	in the range 
	\begin{equation}\label{mm1}
		 \frac{H^{2/n}}{5} \leq m \leq \frac{H^{2/n}}{4}.
		\end{equation}
	Consider
	a monic integer irreducible polynomial $(x-\beta_1) \dots (x-\beta_{\ell})$ of degree $\ell$ with all $\ell$ real roots 
	in the interval $[-2\sqrt{m},2\sqrt{m}]$. 
	By Lemma~\ref{akis}, there are 
	$\gg m^{\ell(\ell+1)/4}$ of such polynomials. 
	Note that we can consider only polynomials with all roots in the open interval $(-2\sqrt{m},2\sqrt{m})$, since the root $\be_i$ may coincide with the endpoints 
	of the interval $\pm 2\sqrt{m}$ only if $\ell=2$, but this removes at most one polynomial from those $\gg m^{\ell(\ell+1)/4}$ above. 
	
	Now, for each
	of the above polynomials with all roots in the open interval $(-2\sqrt{m},2\sqrt{m})$, we set
	\begin{equation}\label{gg1}
g(x)=(x^2-\be_1 x+m) \dots (x^2-\be_{\ell}x+m) \in \Z[x].
\end{equation}
	From $\be_i^2-4m<0$, we see that each such polynomial $g$ has all its $2\ell=k$ roots on the circle 
	$|z|=\sqrt{m}$. Hence, its Mahler measure $M(g) = \sqrt{m}^{2\ell}=m^{\ell}$. 
	
	Consider monic polynomials
	$f$ of the form
	\begin{equation}\label{uu0}
f(x)=g(x) h(x),
\end{equation} 
	where  $g$ is as in \eqref{gg1} and $h \in \Z[x]$ is a monic polynomial of degree $n-k$ that
	has its all $n-k$ roots in the disc $|z| \leq \sqrt{m}/2$.
	Then, $M(h) \leq (\sqrt{m}/2)^{n-k}$.   Consequently, by \eqref{ine5}, \eqref{uu0}, the multiplicativity of the Mahler measure and the upper bound on $m$ in \eqref{mm1}, we deduce
	\begin{align*}
H(f) &=H(gh) \leq 2^n M(gh)=2^n M(g)M(h) \leq 2^n m^{\ell} (\sqrt{m}/2)^{n-k} \\& =2^k m^{n/2} \leq 2^n m^{n/2}=(4m)^{n/2} \leq H.
\end{align*}
So, if $k < n$, such polynomials $f$ constructed in \eqref{uu0} indeed contribute to $R_n(k,H)$. 
	By Lemma~\ref{akis}, the number of such monic irreducible polynomials $h$ of degree $n-k$ is $\gg m^{(n-k)(n-k+1)/4}$. So, by the lower bound on $m$ in
	\eqref{mm1}, 
	 the number of pairs $(g,h)$ for each $m$ satisfying \eqref{mm1}
	is \[\gg m^{\ell(\ell+1)/4+(n-k)(n-k+1)/4} \gg H^{\ell(\ell+1)/(2n)+(n-k)(n-k+1)/(2n)}.\] 
	By \eqref{mm1}, the number of $m$'s is greater than $0.04 H^{2/n}$ for $H$ large enough. This gives the lower bound 
	\begin{equation}\label{uu2}\gg H^{2/n+\ell(\ell+1)/(2n)+(n-k)(n-k+1)/(2n)}\end{equation}
	for the number of distinct pairs $(g,h)$. 
	
	Now, we will prove that two distinct pairs $(g_1,h_1)$ and $(g_2,h_2)$ give distinct products
	$g_1h_1$ and $g_2 h_2$. We first show that
	each polynomial defined in \eqref{gg1} is irreducible. 
	Indeed, each factor $x^2-\beta_i x+m$ has two complex conjugate roots, so if $g$ were reducible, we would have $\prod_{i \in S} (x^2-\beta_ix+m) \in \Z[x]$ for some nonempty proper subset $S$ of $\{1,2,\dots,\ell\}$. This is only possible if the elementary symmetric polynomials in $\beta_i,$ $i \in S$, are all rational. But then $\prod_{i \in  S}(x-\beta_i) \in \Z[x]$, contrary to the irreducibility of the polynomial $\prod_{i=1}^{\ell}(x-\beta_i)$.  
	Assume that 
	\begin{equation}\label{koko}g_1h_1=g_2h_2\end{equation}
	for some $(g_1,h_1) \ne (g_2,h_2)$. 
	Then, $g_1 \ne g_2$.
	By \eqref{gg1}, the constant coefficients of $g_1$ and $g_2$ are $m_1^{\ell}$ and $m_2^{\ell}$ respectively for some integers $m_1, m_2$.
	Without restriction of generality, we may assume that $m_1 \leq m_2$. 
	Since $g_1 \ne g_2$ and $g_1,h_1,g_2,h_2$ are all irreducible, we must have
	$g_1=h_2$. (Of course, by the degree consideration, this can only happen if $2\ell=k=n-k$). Because of \eqref{koko}, this yields $g_2=h_1$. However, the roots of $g_2$ are all of moduli $\sqrt{m_2}$, while the roots of the polynomial $h_1$, by its construction, are all in the disc $|z| \leq \sqrt{m_1}/2<\sqrt{m_2}$, which is a contradiction. 
	Hence, \eqref{koko} is impossible for $(g_1,h_1) \ne (g_2,h_2)$. 
	
	It follows that the number of monic integer polynomials $f$ as in \eqref{uu0} with exactly $k=2\ell$ roots in the circle $|z|=r(f)$ satisfying $H(f) \leq H$ is bounded from below as in \eqref{uu2}. The exponent $e(n,k)$ for $H$
	there equals 
	\begin{align*}
e(n,k) &= 
\frac{2}{n}+\frac{\ell(\ell+1)}{2n}+ \frac{(n-k)(n-k+1)}{2n}	\\&=\frac{2}{n}
+\frac{k(k+2)}{8n}+\frac{n^2+k^2-2nk+n-k}{2n} \\&=\frac{k^2+2k+16}{8n}+\frac{n+1}{2}-k+\frac{4k^2-4k}{8n} \\&=
\frac{n+1}{2}-k+\frac{5k^2-2k+16}{8n},
\end{align*}
which is exactly as in \eqref{eq:lower} when $k$ is even and $k < n$. 

When $k$ is even and $k=n$, in the construction \eqref{uu0} $h(x)$ is of degree $n-k=0$, and so $h(x)=1$, 
and then $f(x)=g(x)$ is irreducible and contributes to $I_n(n,H)$. Hence, $I_n(n,H) \gg H^{e(n,n)}$, that is, 
\begin{equation}  \label{eq:Inn}
I_n(n,H) \gg H^{\frac{n}{8}+\frac{2}{n}+\frac{1}{4}}, 
\end{equation}
which gives the lower bound for $I_n(n,H)$ in Theorem \ref{thm:In} when $n$ is even. 

Hence, if $k$ is even, it remains to bound $R_n(n,H)$ where $k=n$. Since $n \ge 3$ and $n$ is even in this case, we have $k=n \ge 4$. As in the construction \eqref{uu0}, we first construct such a polynomial $g(x)$ as in \eqref{gg1} with $\ell=(n-2)/2$ and then let $h(x)=x^2 - m$, and so $f(x)=g(x)h(x)$ indeed contributes to $R_n(n,H)$, where one can verify $H(f) \le H$ as before. 
Hence, for each $m$ in the range \eqref{mm1}, 
	 the number of distinct pairs $(g,h)$ (that is, the number of such polynomials $f$) is 
$$
\gg m^{\ell(\ell +1)/4} \gg H^{\frac{2}{n} \cdot \frac{n(n-2)}{16}} = H^{\frac{n}{8} - \frac{1}{4}}, 
$$
and thus we have  
$$
R_n(n,H) \gg H^{\frac{2}{n}} \cdot H^{\frac{n}{8} - \frac{1}{4}} = 
H^{\frac{n}{8} + \frac{2}{n} - \frac{1}{4}}, 
$$
which is exactly as in \eqref{pu3} when $k$ is even and $k=n$.

Now, we turn to the case of odd $k$ satisfying $1 \leq k \leq n$. Set $\ell=(k-1)/2$. This time 
we argue with integer $m$ in the range
	\begin{equation}\label{mm11}
	\frac{H^{1/n}}{3} \leq m \leq \frac{H^{1/n}}{2}.
\end{equation}
As above we now consider
a degree $\ell$ monic integer irreducible polynomial $(x-\beta_1) \dots (x-\beta_{\ell})$, but with all $\ell$ real roots 
in the open interval $(-2m,2m)$. By Lemma~\ref{akis} (and the explanation in the case when $k$ is even), there are 
$\gg m^{\ell(\ell+1)/2}$ of such polynomials. For each
of them, we set
\begin{equation}\label{gg11}
g(x)=(x-m)(x^2-\be_1 x+m^2) \dots (x^2-\be_{\ell}x+m^2) \in \Z[x].
\end{equation}
By $\be_i^2-4m^2<0$, $i=1, \ldots, \ell$, we see that the polynomial $g$ has all its $k=2\ell+1$ roots on the circle 
$|z|=m$, with $m$ being the only real root among them. Note that if $k=1$, we have $g(x)=x-m$.

Consider monic polynomials $f$ as in \eqref{uu0} with $g$ as in \eqref{gg11} and with monic $h \in \Z[x]$
of degree $n-k$ with all $n-k$ roots in the disc $|z| \leq m/2$. Then,
$M(h) \leq (m/2)^{n-k}$ and similarly to the above, 
by \eqref{ine5}, \eqref{mm11} and $M(g)=m^k$,
we find that
	\begin{align*}
H(f) &=H(gh) \leq 2^n M(gh)=2^n M(g)M(h) \leq 2^n m^{k} (m/2)^{n-k} \\& =2^k m^{n} \leq (2m)^{n} \leq H.
\end{align*}
So, such polynomials $f$ indeed contribute to $R_n(k,H)$ (note that $g(x)$ is reducible in $\Z[x]$ if $k > 1$). 
This time, by Lemma~\ref{akis}, the number of such monic irreducible polynomials $h$ of degree $n-k$ is $\gg m^{(n-k)(n-k+1)/2}$. Hence, by the lower bound on $m$ in
\eqref{mm11}, 
the number of pairs $(g,h)$ for each $m$ satisfying \eqref{mm11}
is \[\gg m^{\ell(\ell+1)/2+(n-k)(n-k+1)/2} \gg H^{\ell(\ell+1)/(2n)+(n-k)(n-k+1)/(2n)}.\] 
Next, by \eqref{mm11}, the number of $m$'s is greater than $0.15 H^{1/n}$ for $H$ large enough. This gives the lower bound 
\begin{equation}\label{uu22}
\gg H^{1/n+\ell(\ell+1)/(2n)+(n-k)(n-k+1)/(2n)}
\end{equation}
for the number of distinct pairs $(g,h)$. 

Now, exactly as in the case with $k$ even, we can see that the polynomial
$g(x)/(x-m) \in \Z[x]$ is irreducible, so for $(g_1,h_1) \ne (g_2,h_2)$ 
and $g_1h_1=g_2h_2$ we must have $g_1(x)/(x-m_1)=h_2(x)$ and similarly
$g_2(x)/(x-m_2)=h_1(x)$. This yields $m_1=m_2$ and the same contradiction as the above for $k$ even. Therefore, the number of suitable monic polynomials $f$ with exactly
$k$ roots on $|z|=r(f)$ and $H(f) \leq H$ is bounded below as in \eqref{uu22}. In view of $\ell=(k-1)/2$ the exponent there is 
	\begin{align*}e(n,k) &= 
	\frac{1}{n}+\frac{\ell(\ell+1)}{2n}+ \frac{(n-k)(n-k+1)}{2n}	\\&=\frac{1}{n}
	+\frac{k^2-1}{8n}+\frac{n^2+k^2-2nk+n-k}{2n} \\&=\frac{k^2+7}{8n}+\frac{n+1}{2}-k+\frac{4k^2-4k}{8n} \\&=
	\frac{n+1}{2}-k+\frac{5k^2-4k+7}{8n},
\end{align*}
which is exactly as claimed in \eqref{eq:lower} when $k$ is odd. 
This completes the proof for odd $k \geq 1$ and the proof of the theorem. 

\section{Proof of Theorem~\ref{thm:InRn}} \label{sec:proof3}

\subsection{Proof of the bounds for $k \le 3$}
For $I_n(1,H)$, from Theorem~\ref{thm:In} we directly obtain 
$$
I_n(1,H) \sim (2H)^n.
$$

For $R_n(1,H)$, since $n \ge 3$, by Lemma~\ref{lem:reducible} we first have 
$$
R_n(1,H) \ll H^{n-1},
$$
and then considering polynomials of the form $(x+1)g(x)$ with $|g(0)| > 1$ and $g(x)$ contributed to $I_{n-1}(1,H/2)$, we know that all these polynomials contribute to $R_n(1,H)$, and so we get 
$$
R_n(1,H) \gg H^{n-1},
$$
and thus we obtain 
$$
R_n(1,H) \asymp H^{n-1}.
$$

For $I_n(2,H)$, combining Theorem~\ref{thm:Dn} with Lemma~\ref{lem:reducible}, we directly obtain 
$$
I_n(2,H) \asymp H^{n-1/2}. 
$$

For $R_n(2,H)$, since $n \ge 3$, by Theorem~\ref{thm:Rn} we have 
$$
R_n(2,H) \ll H^{n-3/2}, 
$$
and then considering polynomials of the form $(x+1)g(x)$ with $|g(0)| > 1$ and $g(x)$ contributed to $I_{n-1}(2,H/2)$, we know that all these polynomials contribute to $R_n(2,H)$, and so we get 
$$
R_n(2,H) \gg H^{n-1-1/2} = H^{n-3/2},
$$
and thus we obtain 
$$
R_n(2,H) \asymp H^{n-3/2}.
$$

The desired results $I_3(3,H) \sim 2H$ and $I_4(3,H)=0$ follow directly from Theorem~\ref{thm:In}. 

For $R_3(3,H)$, let $f(x)$ be a polynomial contributing to $R_3(3,H)$. If $f$ can be factored as the product of three polynomials of degree one in $\Z[x]$, then $f$ must be of the form $(x\pm a)(x \pm a)(x \pm a), a \in \Z$, and so, noticing $|a^3| \le H$ we have that the number of such polynomials $f$ in this case is $\ll H^{1/3}$. If $f$ is the product of a polynomial of degree one and an irreducible polynomial of degree two in $\Z[x]$, then $f$ must be of the form 
$$
f(x) = (x+a)(x^2+bx+a^2) = x^3+(a+b)x^2+(a^2+ab)x +a^3,
$$
with $a,b\in \Z$ such that $b^2-4a^2 <0$, and so we have $|a| \le H^{1/3}$ and $|b| < 2|a|$, and then considering the choices of $(a,b)$ we deduce that the number of such polynomials $f$ in this case is $\asymp H^{2/3}$. Hence, we obtain 
$$
R_3(3,H) \asymp H^{2/3}, 
$$
as claimed. 

Finally, we want to prove $R_4(3,H) \asymp H\log H$.
In the following, let $f(x)$ be a polynomial contributing to $R_4(3,H)$. 

If $f(x)$ is of the form 
$$
f(x)=(x+a)g(x), \quad a \in \Z, 
$$
with irreducible polynomial $g(x) \in \Z[x]$ of degree 3 such that $r(f)=r(g)=|a|$. Then, $g(x)$ has two conjugate non-real roots of modulus $|a|$, and so by looking at the constant term of $f$ we know that $g$ has a root in $\Q$, which contradicts with the irreducibility of $g$. Hence, this case does not happen. 

If $f(x)$ is of the form 
$$
f(x)=(x+a)g(x), \quad a \in \Z, 
$$
with irreducible polynomial $g(x) \in \Z[x]$  of degree 3 such that $g(x)$ has exactly three roots of modulus $r(f) > |a|$. 
Then, by Lemma~\ref{lem:Ferguson} we know that $g(x)$ is of the form $x^3+b$, and moreover $|a|^3 < |b|$. 
Noticing in this case $f = x^4+ax^3+bx+ab$, we have $|a|^4 < |ab| \le H$. So, the number of such polynomials $f$ is 
\begin{equation} \label{eq:R43-1}
    \asymp \sum_{a=1}^{H^{1/4}} H/a \asymp H\log H.
\end{equation}

If $f(x)$ is of the form 
$$
f(x)=g(x)h(x), 
$$
with irreducible polynomials $g(x), h(x) \in \Z[x]$ of degree 2 such that $g$ has two roots of modulus $r(f)$ and $h$ has exactly one root of modulus $r(f)$. Then, $g$ only has non-real roots and $h$ only has real roots. When $g$ is fixed, then the real root of $h$ having modulus $r(f)$ is fixed up to a sign, and so $h$ has at most two choices (noticing that $h$ is irreducible). In addition, we have $r(g) = r(f) \ll H^{1/3}$ by \eqref{eq:r(f)}. Hence, by considering the choices of $g$, we deduce that  the number of such polynomials $f$ is 
\begin{equation} \label{eq:R43-2}
    \ll H^{1/3} \cdot H^{2/3} = H.
\end{equation}

If $f(x)$ is of the form 
$$
f(x)=g(x)h(x), 
$$
with irreducible polynomial $g(x)$ and reducible polynomial $h(x) \in \Z[x]$ of degree 2 such that $g$ has two roots of modulus $r(f)$ and $h$ has exactly one root of modulus $r(f)$. 
Then, $f(x)$ is in fact of the form 
$$
f(x)=(x+a)(x+b)(x^2+cx+b^2), \quad a,b,c \in \Z, 
$$
with $|a| < |b|$. 
By \eqref{eq:r(f)}, we have $r(f) = |b| \ll H^{1/3}$. So, $|a| \ll H^{1/3}$ and $|c| \le 2|b| \ll H^{1/3}$. 
Hence, by considering the choices of $a,b,c$, we deduce that  the number of such polynomials $f$ is 
\begin{equation} \label{eq:R43-3}
    \ll H^{1/3} \cdot H^{1/3} \cdot H^{1/3} = H.
\end{equation}

If $f(x)$ is of the form 
$$
f(x)=g(x)h(x), 
$$
with irreducible polynomial $g(x)$ and reducible polynomial $h(x) \in \Z[x]$ of degree 2 such that $g$ has exactly one root of modulus $r(f)$ and $h$ has two roots of modulus $r(f)$. 
Then, we know that the irreducible polynomial $g(x)$ has only real roots, and $h(x)$ is of the form $x^2 -a^2$ or $(x-a)^2$, where $a \in \Z$ and $|a|=r(f)$. 
However, a real number of modulus $|a|$ must be $a$ or $-a$, 
and so either $g(a)=0$ or $g(-a)=0$. But $g$ is irreducible in $\Z[x]$. Hence, this case can not happen. 

Finally, if $f(x)$ can be factored as the product of four polynomials in $\Z[x]$ of degree one, then noticing that $r(f) \ll H^{1/3}$ and $f$ has exactly three roots of modulus $r(f)$, we get that  the number of such polynomials $f$ is 
\begin{equation} \label{eq:R43-4}
    \ll H^{1/3} \cdot H^{1/3} = H^{2/3}.
\end{equation}

Therefore, combining \eqref{eq:R43-1}, \eqref{eq:R43-2}, \eqref{eq:R43-3} with \eqref{eq:R43-4}, we obtain 
$$
R_4(3,H) \asymp H\log H,
$$
as claimed.

\subsection{Proof of the bound for $R_4(4,H)$}

Let $f(x)$ be a polynomial contributing to $R_4(4,H)$. 

If $f(x)$ is of the form 
$$
f(x)=(x+a)g(x), \quad a \in \Z, 
$$
with irreducible polynomial $g(x) \in \Z[x]$ of degree 3 such that $r(f)=r(g)=|a|$. Then, $g(x)$ must have a real root of modulus $|a|$. However, a real number of modulus $|a|$ must be $a$ or $-a$, 
and so either $g(a)=0$ or $g(-a)=0$. But $g$ is irreducible in $\Z[x]$. Hence, this case can not happen. 

If $f(x)$ is of the form 
$$
f(x)=g(x)h(x), 
$$
with polynomials $g(x), h(x) \in \Z[x]$ of degree 2. 
Then, both $g$ and $h$ have two roots of modulus $r(f)$. 
Then, $g(x)$ and $h(x)$ must be of the forms 
$$
g(x) = (x^2+ax \pm b), \quad h(x)=(x^2+cx \pm b), \quad a,b,c\in \Z.
$$
Since $r(g) = r(h)= r(f) \ll H^{1/4}$ by \eqref{eq:r(f)}, 
we deduce that $|a| \ll H^{1/4}$, $|b| \ll H^{1/2}$ and $|c| \ll H^{1/4}$.
Hence, by considering the choices of $(a,b,c)$, we deduce that  the number of such polynomials $f$ is 
\begin{equation} \label{eq:R44-1}
    \ll H^{1/4} \cdot H^{1/2} \cdot H^{1/4} = H.
\end{equation}
In addition, all the polynomials of the form $(x^2+ax + b)(x^2+cx + b)$ with $a,b,c\in \Z$, $0 <b < H^{1/2}$, $a^2-4b<0$ and $c^2 - 4b <0$ contribute to $R_4(4,H)$ for any large enough $H$, and thus we obtain that 
the number of such polynomials $f$ is 
\begin{equation} \label{eq:R44-2}
    \gg \sum_{b=1}^{H^{1/2}} \sqrt{b} \cdot \sqrt{b} = \sum_{b=1}^{H^{1/2}} b \gg  H.
\end{equation}

If $f(x)$ can be factored as the product of four polynomials in $\Z[x]$ of degree one, then noticing that $r(f) \ll H^{1/4}$ and all the four roots of $f$ have modulus $r(f)$, we get that  the number of such polynomials $f$ is 
\begin{equation} \label{eq:R44-3}
    \ll H^{1/4}.
\end{equation}

Therefore, combining \eqref{eq:R44-1}, \eqref{eq:R44-2} with \eqref{eq:R44-3}, we obtain 
$$
R_4(4,H) \asymp H,
$$
as claimed.

\subsection{Proof of the bound for $I_4(4,H)$}

First, since $I_2(2,H) \asymp H^{3/2}$, by considering polynomials of the form $x^4+ax^2+b$ such that $x^2+ax+b$ contributes to $I_2(2,H)$ and using Lemma~\ref{lem:Capelli} as before, we obtain 
\begin{equation}  \label{eq:I44-1}
I_4(4,H) \gg H^{3/2}.
\end{equation}

Now, we want to prove that $I_4(4,H) \ll H^{3/2}$. 
Let $f(x)$ be a polynomial contributing to $I_4(4,H)$. 
So, $f(x)$ is irreducible and all of its roots have equal modulus. 

If $f(x)$ is of the form $x^4+ax^2+b$, then since $x^2+ax+b$ contributes to $D_2(2,H)$, by Theorem~\ref{thm:Dn} we get that the number of such polynomials $f$ is 
\begin{equation}  \label{eq:I44-2}
\ll H^{3/2}.
\end{equation}

In the following, we consider polynomials $f(x)$ contributing to $I_4(4,H)$ and not of the form $x^4+ax^2+b$. 
We denote by $I_4^*(4,H)$ the number of such polynomials $f$. 
So, in view of \eqref{eq:I44-2} it remains to prove $I_4^*(4,H) \ll H^{3/2}$. In fact, we can do even better. 

Note that $f$ from $I_4^*(4,H)$
cannot have a real root. Indeed, if $\alpha$ is a real root of $f$, then so must be $-\alpha$, so $f(x)=f(-x)$, which is impossible, because  
$f$ is not of the form $x^4+ax^2+b$.  
Consequently, each polynomial $f(x)$ contributing to $I_4^*(4,H)$ can be factored as 
\begin{equation*}
f(x)=(x-\alpha)(x-\bar{\alpha})(x-\beta)(x-\bar{\beta})=(x^2-ux+r)(x^2-vx+r),
\end{equation*}
where $\bar{\alpha}$ and $\bar{\beta}$ are the complex conjugates of the non-real roots $\alpha$ and $\beta$ respectively, 
$u=\alpha+\bar{\alpha}$, $v=\beta+\bar{\beta}$ and 
$|\alpha|^2=|\beta|^2=r>0$.  
The coefficients of $f$ for $x^3,x^2,x,1$ are, respectively, 
\[-(u+v), \>\> uv+2r, \>\> 
-(u+v)r, \>\> r^2 \in \Z.\] 
Note that $u+v \ne 0$, since $f \in I_4^*(4,H)$. From $u+v \in \Z \setminus \{0\}$
and $(u+v)r \in \Z$ it follows that $r \in \Q$. Hence, as $r^2 \in \Z$, we must
have $r \in \Z$. From $r>0$ we conclude that $r$ is a positive integer. Furthermore, $r \leq \sqrt{H}$, since the height of $f$ does not exceed $H$.

Now, for each such positive integer $r$, it remains to count the number of pairs of integers $(u+v,uv)$ in terms of $r$ and then sum  them up
for $r$ from $1$ to $\lfloor \sqrt{H} \rfloor$. 
Since the roots of $f$ are non-real, we clearly have
$u^2-4r<0$ and $v^2-4r<0$. Hence, $|u+v|<4\sqrt{r}$ and $|uv|<4r$. 
Thus, the number of integer pairs $(u+v,uv)$ does not exceed $(8\sqrt{r}+1)(8r+1)$. Consequently, 
\[I_4^*(4,H) \leq \sum_{r=1}^{\sqrt{H}} (8\sqrt{r}+1)(8r+1) \ll \sum_{r=1}^{\sqrt{H}} r^{3/2}\ll H^{5/4}.\]

Therefore, combining this estimate with \eqref{eq:I44-1}
and \eqref{eq:I44-2}, we obtain 
$$
I_4(4,H) \asymp H^{3/2},
$$
as claimed.

\section*{Acknowledgement}
The authors would like to thank Igor Shparlinski for bringing their attention back to the topic of this paper and also for helpful discussions. 
For the research, Min Sha was supported by the Guangdong Basic and Applied Basic Research Foundation (No. 2022A1515012032).


\begin{thebibliography}{20}	

\bibitem{Aki}
 S.~Akiyama and A.~Peth\H{o}, {\it On the distribution of polynomials with bounded roots II. Polynomials with
	integer coefficients}, Unif. Distrib. Theory, \textbf{9} (2014), 5--19.


\bibitem{Bilu}
Y. Bilu, F. Luca, J. Nieuwveld, J. Ouaknine, D. Purser and J. Worrell,  
\textit{Skolem meets Schanuel}, In 47th International Symposium on Mathematical Foundations of Computer
 Science, MFCS, volume 241 of LIPIcs, pages 20:1-20:15, 2022.

\bibitem{Boyd}
 D. W. Boyd, \textit{Irreducible polynomials with many roots of maximal modulus}, Acta Arith., \textbf{68} (1994), 85--88.

\bibitem{Du2014}
A.~Dubickas, \textit{On the number of reducible polynomials of bounded naive height},  \textit{Manuscr. Math.}, \textbf{144} (2014), 439--456.


\bibitem{Du2016}
A.~Dubickas, \textit{Counting integer reducible polynomials with bounded measure}, \textit{Appl. Anal. Discrete Math.}, 
\textbf{10} (2016), 308--324.

	
\bibitem{DS1}
A.~Dubickas and M.~Sha, \textit{Counting and testing dominant polynomials}, Exp. Math., \textbf{24} (2015), 312--325.

\bibitem{DS2}
A. Dubickas and M. Sha,
\textit{Counting degenerate polynomials of fixed degree and bounded height}, Monatsh. Math., \textbf{177} (2015), 517--537.

\bibitem{DS3}
A. Dubickas and M. Sha,
\textit{Positive density of integer polynomials with some prescribed properties}, J. Number Theory, \textbf{159} (2016), 27--44.

\bibitem{DS4}
A. Dubickas and M. Sha,
\textit{On the number of integer polynomials with multiplicatively dependent roots}, Acta Math. Hungar., \textbf{154} (2018), 402--428.

\bibitem{DS5}
A. Dubickas and M. Sha,
\textit{Counting decomposable polynomials with integer coefficients}, Monatsh. Math., \textbf{200} (2023), 229--253.
	
\bibitem{Ferguson}
R. Ferguson, \textit{Irreducible polynomials with many roots of equal modulus}, Acta Arith., \textbf{78} (1997), 221--225.

\bibitem{Lipton}
R. J. Lipton, F. Luca, J. Nieuwveld, J. Ouaknine, D.Purser and J. Worrell, \textit{On the Skolem Problem and the Skolem
Conjecture}, In 37th Annual ACM/IEEE Symposium on Logic in Computer
Science (LICS) (LICS'22), August 2-5, 2022, Haifa, Israel, pages 5:1–5:9, ACM, 2022.

\bibitem{Luca1}
F. Luca, J. Ouaknine and J. Worrell, \textit{Universal Skolem Sets}, In 36th Annual
 ACM/IEEE Symposium on Logic in Computer Science, LICS, pages 1-6. IEEE, 2021.

\bibitem{Luca2}
 F. Luca, J. Ouaknine and J. Worrell, \textit{A Universal Skolem Set of Positive Lower
 Density}, In 47th International Symposium on Mathematical Foundations of Computer
 Science, MFCS, volume 241 of LIPIcs, pages 73:1-73:12, 2022.

\bibitem{Luca3}
F. Luca, J. Maynard, A. Noubissie, J. Ouaknine and J. Worrell, 
\textit{Skolem meets Bateman-Horn}, preprint, 2023, \url{https://arxiv.org/abs/2308.01152}. 


\bibitem{MST}
M. Mignotte, T.N. Shorey and R. Tijdeman, \textit{The distance between terms of an algebraic recurrence sequence}, J. Reine Angew. Math., \textbf{349} (1984), 63--76.

\bibitem{Sch}
A. Schinzel, \textit{Polynomials with special regard to reducibility}, Cambridge University Press, Cambridge, 2000.

\bibitem{Sha}
M. Sha, \textit{Effective results on the Skolem Problem for linear recurrence sequences}, J. Number Theory,  \text{197} (2019), 228--249. 

\bibitem{Ver}
N.K. Vereshchagin, \textit{Occurrence of zero in a linear recursive sequence}, Math. Notes, \textbf{38} (1985), 609--615.

\bibitem{Waerden}
B.~L.~van der Waerden, \textit{Die Seltenheit der reduziblen Gleichungen und der Gleichungen
mit Affekt}, Monatshefte f\"ur Matematik und Physik, \textbf{43} (1936), 133--147.

\bibitem{Wald}
M.~Waldschmidt, \textit{Diophantine approximation on linear algebraic groups. Transcendence properties of the exponential function in several variables}, Grundlehren der Mathematischen Wissenschaften {\bf 326}, Springer, Berlin, 2000.
\end{thebibliography}
\end{document}